  \theoremstyle{plain}
    \newtheorem{Thm}{Theorem}[section]
    \newtheorem{Prop}[Thm]{Proposition}
   \newtheorem{Lem}[Thm]{Lemma}
    \newtheorem{Cor}[Thm]{Corollary}
    \newtheorem{subsec}[Thm]{}
\theoremstyle{definition}
    \newtheorem{Def}[Thm]{Definition}
    \newtheorem{Exm}[Thm]{Example}
\theoremstyle{remark}
     \newtheorem{Rem}[Thm]{Remark}
\begin{document}
\title{Deformation of hom-Lie-Rinehart algebras}
\author{Ashis Mandal}
\address{Department of Mathematics and Statistics, Indian Institute of Technology,
Kanpur 208016, India.}
\author{Satyendra Kumar Mishra}
\address{Statistics and Mathematics Unit, Indian Statistical Institute Bangalore, 560059, India.}
\begin{abstract}
We study formal deformations of hom-Lie-Rinehart algebras. The associated deformation cohomology that controls deformations is constructed using multiderivations of hom-Lie-Rinehart algebras.
\end{abstract}
\keywords{Hom-Lie-Rinehart algebras, deformation of algebras, deformation complex, differential graded Lie algebras.}
 \footnote{AMS Mathematics Subject Classification (2010): $17$A$30,$ $17$B$55,$ $17$B$99$.}
\maketitle
\section{Introduction}
The aim of this paper is to study formal deformations of hom-Lie-Rinehart algebras. In \cite{MM2018}, hom-Lie-Rinehart algebras are introduced as an algebraic analogue of hom-Lie algebroids. This notion generalises both the notion of a hom-Lie algebra and the notion of a hom-Lie algebroid in \cite{LGT13}. If we start with a Lie-Rinehart algebra (\cite{Rin63}, \cite{Hueb98}, \cite{Hueb99}, \cite{Hueb04}) and a homomorphism into itself, we obtain a canonical hom-Lie-Rinehart algebra structure(usually referred as ``obtained by composition").

In \cite{MM2018}, we study modules over a hom-Lie-Rinehart algebra and a cohomology with coefficients in a left module. We also characterize this cohomology (in low dimensions) in terms of the  group of automorphisms and the equivalence classes of abelian extensions (see \cite{MM2018} for more details). Later on, a non-abelian tensor product and universal central extensions  are introduced in \cite{MME18}, crossed modules for hom-Lie-Rinehart algebras are studied in \cite{TFY18}, and a relationship between hom-Lie-Rinehart algebras and hom-Gerstenhaber algebras is discussed in \cite{MM2017}. 

In the case of classical algebras, the associated cohomologies with coefficients in adjoint representations control deformations. For example, Hochschild (and Chevalley-Eilenberg) cohomology plays the role of deformation cohomology in the case of associative (and Lie) algebras. Likewise, for hom-algebras, in particular for hom-Lie algebras and hom-associative algebras, cohomologies with coefficients in certain adjoint representations are defined (generalizing Chevalley-Eilenberg and Hochschild cohomology, respectively) in \cite{AEM11}, which play the role of deformation cohomologies. 

For Lie algebroids, more work is required to find suitable deformation cohomology since the adjoint representation is not defined as representation in the usual sense (see \cite{TrMs}). In \cite{CM08}, M. Crainic and I. Moerdijk introduced the notion of multiderivations of a vector bundle. The space of multiderivations of a vector bundle forms a graded Lie algebra. If vector bundle has a Lie algebroid structure, a differential can be associated with this graded Lie algebra to obtain a differential graded Lie algebra (in short DGLA). They also proved that this DGLA controls deformations (in the sense of \cite{CM08}) of a Lie algebroid. In the present article, we describe a differential graded Lie algebra for hom-Lie-Rinehart algebra, which gives a deformation complex required to study one-parameter formal deformations.

In Section $2$, we recall some basic definitions. Let $R$ be a commutative ring of characteristic zero and with unity. Also, let $A$ be an associative commutative $R$-algebra, and $\phi:A\rightarrow A$ be an $R$-algebra homomorphism. 

In Section $3$, we first define the notion of $(\phi,\beta)$-multiderivations of a pair $(M,\beta)$, where $M$ is an $A$-module, and $\beta:M\rightarrow M$ is a $\phi$-function linear map. Then we consider a graded Lie algebra structure on the space of $(\phi,\beta)$-multiderivations.  Next, we describe hom-Lie-Rinehart algebra structures on the pair $(M,\beta)$ $($over $(A,\phi))$ in terms of this graded Lie algebra. Consequently, we associate a differential graded Lie algebra (DGLA) to a hom-Lie-Rinehart algebra. 

In Section $4$, we show that this DGLA controls the deformations of a hom-Lie-Rinehart algebra. We also prove that if $\phi: A\rightarrow A$ is an algebra automorphism, then the space of $\phi$-derivations of $A$ is rigid as a hom-Lie-Rinehart algebra. In the last section, we discuss  some particular cases of this deformation complex for hom-Lie-Rinehart algebras.

\section{Preliminaries}
In this section, we recall basic definitions concerning hom-structures from \cite{AEM11,HLS06,MS08,MS10,Sheng12,MM2018} in order to  fix notations and terminology. Let $R$ be a commutative ring of characteristic zero and with unity. Throughout the paper, we consider all modules, algebras and their tensor products over the ring $R$ and all linear maps to be $R$-linear unless otherwise stated. 

\begin{Def}
A hom-Lie algebra is a triple $\textstyle{(L,[-,-],\alpha)}$, which consists of a $R$-module $L$, a skew-symmetric $R$-bilinear map $\textstyle{[-,-]:L\times L\rightarrow L}$, and a linear map $\textstyle{\alpha:L\rightarrow L}$, satisfying
 $$[\alpha(x),[y,z]]+[\alpha(y),[z,x]]+[\alpha(z),[x,y]]=0 ~~~~\mbox{for all}~~x,y,z\in L.$$
 
 A hom-Lie algebra $\textstyle{(L,[-,-],\alpha)}$ is called multiplicative if the map $\alpha$ preserves the bracket, i.e. $\alpha[x,y]=[\alpha(x),\alpha(y)]$. Moreover, if $\alpha$ is an $R$-module automorphism of $L$, then the hom-Lie algebra $\textstyle{(L,[-,-],\alpha)}$ is called a regular hom-Lie algebra.
\end{Def}
\begin{Exm}
Given a Lie algebra $\textstyle{(L,[-,-])}$ with a Lie algebra homomorphism $\alpha:L\rightarrow L$, we can define a hom-Lie algebra as the triple $\textstyle{(L,\alpha\circ [-,-],\alpha )}$.
\end{Exm}
\begin{Def}\label{Rep-hom-Lie}
A representation of a hom-Lie algebra $\textstyle{(L,[-,-],\alpha)}$ on a $R$-module $V$ is a pair $(\theta, \beta)$ consisting of $R$-linear maps $\theta: \mathfrak{g}\rightarrow \mathfrak{g}l(V)$ and $\beta: V\rightarrow V$ such that 
\begin{align*}
\theta(\alpha(x))\circ \beta&= \beta\circ \theta(x),\\
\theta([x,y])\circ \beta&= \theta(\alpha(x))\circ\theta(y)-\theta(\alpha(y))\circ\theta(x).
\end{align*}
for all $x,y \in L$.
\end{Def}
\begin{Exm}
 For $s\in \mathbb{Z}$, we can define the $\alpha^s$-adjoint representation of the regular hom-Lie algebra $(L,[-,-],\alpha)$ on $L$ by $(ad_s,\alpha)$, where
$$ad_s(g)(h)=[\alpha^s(g),h]~~~\mbox{for all}~g,h\in \mathfrak{g}.$$
\end{Exm}
Let $A$ be an associative commutative $R$-algebra and $Der(A)$ denote the space of $R$-derivations of the algebra $A$. Then $Der(A)$ is simultaneously an $A$-module and a Lie algebra with the commutator bracket. 
\begin{Def}\label{Lie-Rin}
A Lie-Rinehart algebra $L$ over (an associative commutative $R$-algebra) $A$ is a Lie algebra  over $R$ with an $A$-module structure and a $R$-module homomorphism $\rho:L\rightarrow Der(A)$, such that 
$\rho$ is simultaneously an $A$-module homomorphism and a Lie $R$-algebra homomorphism and 
$$ [x, ay]=a[x,y]+\rho(x)(a)y~~\mbox{ for all}~ x,y\in L,~a\in A.$$
 \end{Def}
\begin{Def}\label{DerS}
 Given an associative commutative algebra $A$, an $A$-module $M$ and an algebra endomorphism $\phi: A\rightarrow A$, we call an $R$-linear map $\delta: A\rightarrow M$ a $\phi$-derivation of $A$ into $M$ if it satisfies the required identity; 
$$\delta(a b)=\phi(a) \delta(b)+\phi(b) \delta(a)~~\mbox{for all}~~a,b \in A.$$
 Let us denote by $Der_{\phi}(A)$ the set of $\phi$-derivations.
\end{Def} 

Let $M$ be a smooth manifold, and $\psi: M\rightarrow M $ be a smooth map. Then the map $\psi$ induces an algebra homomorphism $\textstyle{\psi^*: C^{\infty}(M)\longrightarrow C^\infty(M)}$, and the space of $\psi^*$-derivations of $C^{\infty}(M)$ into itself can be identified with the space of sections of the pull- back bundle of the tangent bundle $TM$, which is denoted by $\textstyle{\Gamma(\psi^!TM)}.$ 

\begin{Def}\label{hom-Lie}
 A hom-Lie algebroid is a quintuple $(A,\phi,[-,-],\rho,\alpha)$, where $A$ is a vector bundle over a smooth manifold $M$, the map $\phi: M \rightarrow M$ is a smooth map, the bracket $[-,-]:\Gamma(A)\otimes \Gamma(A)\rightarrow \Gamma(A)$ is a bilinear map, the map $\rho: \phi^{!}A\rightarrow \phi^!TM$ is called the anchor and $\alpha: \Gamma (A)\rightarrow\Gamma(A)$ is a linear map such that following conditions are satisfied.
\begin{enumerate}[label=(\roman*)]
\item $\textstyle{\alpha(f.s)=\phi^*(f) \alpha(s)}$ for all $s\in \Gamma(A),~f \in C^{\infty}(M)$.
\item The triplet $(\Gamma(A),[-,-],\alpha)$ is a hom-Lie algebra.
\item The following hom-Leibniz identity holds:
$$\textstyle{[s,f.t]=\phi^*(f).[s,t]+\rho(s)[f]\alpha(t)};$$
for all $\textstyle{s,t\in \Gamma(A),~f \in C^{\infty}(M)}$.
\item The pair $(\rho,\phi^*)$ is a representation of $\textstyle{(\Gamma(A),[-,-],\alpha)}$ on $C^{\infty}(M)$.
\end{enumerate}

A hom-Lie algebroid $\textstyle{(A,\phi,[-,-],\rho,\alpha)}$ is called regular (or invertible) hom-Lie algebroid if the map $\alpha: \Gamma (A)\rightarrow\Gamma(A)$ is an invertible map and the smooth map $\phi: M \rightarrow M$ is a diffeomorphism.
\end{Def}

Here, $\rho(s)[f]$ denotes  a function on $M$ given by $$\rho(s)[f](m)=\big<d_{\phi(m)}f,\rho_m(s_{\phi(m)}) \big>$$ for $m\in M$. The map $\rho_m:(\phi^!A)_m\cong A_{\phi(m)}\rightarrow (\phi^!TM)_m\cong T_{\phi(m)}M$ is the anchor map evaluated at $m\in M$ and $s_{\phi(m)}$ is  image of the section $s\in\Gamma (A)$ at $\phi(m)\in M$.

\newpage

\subsection{Hom-Lie-Rinehart algebras:}
\begin{Def}\label{hom-LR}
A tuple $\textstyle{(A,L,[-,-]_L,\phi,\alpha_L,\rho_L)}$ is called a hom-Lie Rinehart algebra over $(A,\phi)$  if $L$ is an $A$-module, $\textstyle{[-,-]_L: L\times L\rightarrow L}$ is a skew-symmetric bilinear map, the map $\phi:A\rightarrow A$ is an algebra homomorphism, $\alpha_L :L\rightarrow L $ is a linear map preserving the bracket $[-,-]_L$, and $\rho_L: L\rightarrow Der_{\phi}A$ is a $R$-linear map satisfying the following conditions:
\begin{enumerate}[label=(\roman*)]
\item $\alpha_L(a.x)=\phi(a).\alpha_L(x)$ for all $a\in A,~x\in L $.
\item The triple $(L,[-,-]_L,\alpha_L)$ is a hom-Lie algebra.
\item The pair $(\rho_L, \phi)$ is a representation of $(L,[-,-]_L,\alpha_L)$ on $A$.
\item $\rho_L(a.x)=\phi(a).\rho_L(x)$ for all $a\in A,~x\in L $.
\item $[x,a.y]_L=\phi(a)[x,y]_L+ \rho_L(x)(a)\alpha_L(y)$ for all $a\in A,~x,y\in L $.
\end{enumerate}
\end{Def}

Let us denote a hom-Lie Rinehart algebra $\textstyle{(A,L,[-,-]_L,\phi,\alpha_L,\rho_L)}$ over $(A,\phi)$ simply by $(\mathcal{L},\alpha_L)$. In particular,

\begin{enumerate}[label=(\alph*)]
\item If $\alpha_L=Id_L$ in the above definition, then $\phi=id_A$ and the hom-Lie-Rinehart algebra $\textstyle{(A,L,[-,-]_L,\phi,\alpha_L,\rho_L)}$ is a Lie-Rinehart algebra $L$ over $A$.
 
\item Any hom-Lie algebra $(L,[-,-]_L,\alpha_L)$ (over $R$) is also a hom-Lie Rinehart algebra $(A,L,[-,-]_L,\phi,\alpha_L,\rho_L)$ with $A=R$, algebra morphism $\phi=Id_{R}$ and the trivial action of $L$ on $R$. 

\item Any hom-Lie algebroid $(A,\phi,[-,-],\rho,\alpha)$ over a smooth manifold $M$, gives a hom-Lie-Rinehart algebra $\textstyle{(C^{\infty}(M),\Gamma A,[-,-],\phi^*,\alpha,\rho)}$ over $(C^{\infty}(M),\phi^*)$, where $\Gamma A$ is the space a sections of the underline vector bundle $A$ over  $M$ and the algebra homomorphism $\textstyle{\phi^*:C^{\infty}(M)\rightarrow C^{\infty}(M)}$ is induced by the smooth map $\phi: M\rightarrow M$.
\end{enumerate}

\begin{Exm}\label{hom-LR byc}
If we consider a Lie-Rinehart algebra $L$ over $A$ along with an endomorphism $$(\phi,\alpha):(A,L)\rightarrow (A,L)$$ in the category of Lie-Rinehart algebras then the tuple $(A,L,[-,-]_{\alpha}, \phi,\alpha,\rho_{\phi})$ is a hom-Lie-Rinehart algebra, called ``obtained by composition", where
\begin{enumerate}[label=(\roman*)]
\item $[x,y]_{\alpha}=\alpha[x,y]$ for $x,y\in L$; 
\item $\rho_{\phi}(x)(a)=\phi(\rho(x)(a))$ for $x\in L, ~a\in A$.
\end{enumerate} 
\end{Exm}

\begin{Exm}\label{Product}
Let $(A,L,[-,-]_L,\phi,\alpha_L,\rho_L)$ and $(A,M,[-,-]_M,\phi,\alpha_M,\rho_M)$ be hom-Lie-Rinehart algebras over $(A,\phi)$. We consider $$L\times_{Der_{\phi}A}M=\{(l,m)\in L\times M : \rho_l(l)=\rho_M(m)\},$$ where $L\times M$ denotes the Cartesian product. Then $(A,L\times_{Der_{\phi}A}M,[-,-],\phi,\alpha,\rho)$ is a hom-Lie-Rinehart algebra, where  
\begin{enumerate}[label=(\roman*)]
\item the bracket is given by 
$$[(l_1,m_1),(l_2,m_2)]=([l_1,l_2],[m_1,m_2]);$$
\item the endomorphism $\alpha:L\times_{Der_{\phi}A}M\rightarrow L\times_{Der_{\phi}A}M$ is given by
$$\alpha(l,m)=(\alpha_L(l),\alpha_M(m));$$
\item and the anchor map $\rho: L\times_{Der_{\phi}A}M\rightarrow Der_{\phi}A$ is given by
$$\rho(l,m)(a)=\rho_L(a)=\rho_M(a);$$
\end{enumerate}   
for all $l,l_1,l_2\in L$, $m,m_1,m_2\in M$, and $a\in A$. The above  structure gives the categorical product in the category $hLR_A^{\phi}$. 
\end{Exm}



\begin{Def}\label{Mod}
Let $M$ be an $A$-module, and $\beta\in End_{R}(M)$. Then the pair $(M,\beta)$ is a left module over a hom-Lie Rinehart algebra $(\mathcal{L},\alpha_L)$ if the following holds.
\begin{enumerate}[label=(\roman*)]
\item There is a map $\theta:L\otimes M\rightarrow M$, such that the pair  $(\theta,\beta)$ is a representation of the hom-Lie algebra $(L,[-,-]_L,\alpha_L)$ on $M$. Let us denote $\theta(x,m)$ by $\{x,m\}$ for $x\in L,~m\in M$.
\item $\beta(a.m)=\phi(a).\beta(m)$ for all $a\in A~\mbox{and}~m\in M$.
\item $\{a.X,m\}=\phi(a)\{X,m\}$ for all $a\in A,~X\in L,~m\in M$.
\item $\{X,a.m\}=\phi(a)\{X,m\}+\rho_L(X)(a).\beta(m)$ for all $X\in L,~a\in A,~m\in M$.
\end{enumerate} 
\end{Def}
In particular, for $\alpha_L=Id_L$ and $\beta=Id_M$, $(\mathcal{L},\alpha_L)$ is a Lie-Rinehart algebra and $M$ is a left Lie-Rinehart algebra module over the Lie-Rinehart algebra $L$.
\begin{Exm}
The pair $(A,\phi)$ is a canonical left $(\mathcal{L},\alpha_L)$-module, where the left action of $L$ on $A$ is given by the anchor map. 
\end{Exm}
Let $(\mathcal{L},\alpha_L)$ be a hom-Lie Rinehart algebra over $(A,\phi)$ and $(M,\beta)$ be a left module over $(\mathcal{L},\alpha_L)$. Let us consider the graded $R$-module 
$$C^*(L;M):=\oplus_{n\geq 1}C^n(L;M)$$
where, $C^n(L;M)\subseteq Hom_R(\wedge_R^n L,M)$ consisting of elements $f\in Hom_R(\wedge_R^n L,M)$ satisfying the following conditions.
\begin{enumerate}
\item $f(\alpha_L(x_1),\cdots,\alpha_L(x_n))=\beta(f(x_1,x_2,\cdots,x_n))$ for all $x_i\in L,~1\leq i\leq n$
\item $f(x_1,\cdots,a.x_i,\cdots,x_n)=\phi^{n-1}(a)f(x_1,\cdots,x_i,\cdots,x_n)$ for all $x_i\in L,\\~1\leq i\leq n,~\mbox{and}~ a\in A$.
\end{enumerate}
Define the $R$-linear maps $\delta:C^n(L;M)\rightarrow C^{n+1}(L;M) $ given by 
\begin{equation*}
\begin{split}
&\delta f(x_1,\cdots,x_{n+1})\\
&= \sum_{i=1}^{n+1}(-1)^{i+1}\{\alpha_L^{n-1}(x_i),f(x_1,\cdots,\hat{x_i},\cdots,x_{n+1})\}\\&+\sum_{1\leq i<j\leq n+1}f([x_i,x_j],\alpha_L(x_1),\cdots,\hat{\alpha_L(x_i)},\cdots,\hat{\alpha_L(x_j)},\cdots,\alpha_L(x_{n+1}))
\end{split}
\end{equation*}
for all $f\in C^n(L;M),~ x_i\in L, $ and $1\leq i\leq n+1$. Here, $(C^*(L,M),\delta)$ forms a cochain complex, see \cite{MM2018} for more details. 
\section{Deformation complex for hom-Lie-Rinehart algebras}
In this section, we construct a deformation complex which encodes all the information about deformation of hom-Lie-Rinehart algebras. 

Let $M$ be an ${A}$-module, $\phi:A\rightarrow  A$ be an algebra homomorphism, and  $\beta:M\rightarrow M $ be a $\phi$-function linear map, i.e $\beta(a.x)=\phi(a).\beta(x)$ for $a\in A$, and $x\in M$. A $(\phi,\beta)$-derivation of $M$ is a linear  map $D: M \rightarrow M$ such that there exists $\sigma_D \in Der_{R} (\mathcal{A})$ satisfying the following conditions.
\begin{enumerate}[label=(\roman*)]
\item $D (f x) = f D (x) + \sigma_D (f) x, ~\text{ for } ~ f \in \mathcal{A}, x \in M.$
\item $D\circ \beta=\beta\circ D$, and $\sigma_D\circ\phi=\phi\circ \sigma_D$.
\end{enumerate}
 
Let us denote by $Der_{\phi} (M,\beta)$, the space of $(\phi,\beta)$-derivations of $M$. It is a Lie algebra with the Lie bracket, given by the commutator bracket. 

Next, we consider a graded $R$-module of multiderivations on which we describe a graded Lie algebra structure by extending the canonical Lie algebra structure on the space of $(\phi,\beta)$-derivations of $M$.

\begin{Def}
Let $M$ be an $A$-module, $\phi:A\rightarrow  A$ be an algebra homomorphism, and  $\beta:M\rightarrow M $ be a $\phi$-function linear map. Then a linear map 
$$\textstyle{D:\wedge^{n+1}M\rightarrow M}$$
is called a $(\phi, \beta)$-multiderivation of degree $n$ (of the $A$-module $M$) if there exists a linear map $\sigma_D:\wedge^{n}M\rightarrow Der_{\phi^n}A$ such that the following conditions are satisfied.
\begin{enumerate}[label=(\roman*)]
\item $\textstyle{D(\beta(x_1),\beta(x_2),\cdots, \beta(x_{n+1}))=\beta(D(x_1,x_2,\cdots,x_{n+1}))},$
\item  $\textstyle{\sigma_D(\beta(x_1),\beta(x_2)\cdots, \beta(x_n))(\phi(a))=\phi(\sigma_D(x_1,x_2, \cdots,x_n)(a))},$
\item $\textstyle{\sigma_D(x_1,x_2,\cdots,a.x_n)=\phi^{n}(a).\sigma_D(x_1,x_2,\cdots,x_{n})},$ and
\item $\textstyle{D(x_0,x_1,\cdots,a.x_{n})=\phi^n(a)D(x_0,\cdots,x_{n})+\sigma_D(x_0,\cdots,x_{n-1})(a).\beta^n(x_n)}$,
\end{enumerate}
for all $x_0,\cdots,x_n\in M$, and $a\in A$.

   The map $\sigma_D$ is called the symbol map of the $(\phi, \beta)$-multiderivation $D$. Let us denote the space of n-degree $(\phi,\beta)$-multiderivations of $M$ by $\mathfrak{Der}^n_{\phi}(M,\beta)$. 
\end{Def}

\begin{Rem} Let us consider the following cases:
\begin{enumerate}[label=(\alph*)]
\item If $n=0$, then $\mathfrak{Der}^n_{\phi}(M,\beta)= Der_{\phi}(M,\beta)$, the space of $(\phi,\beta)$-derivations of $A$-module $M$.
\item For $\phi=id_A$ and $\beta=id_M$, the above definition describes the notion of $A$-module multiderivations \cite{LUCA15} of an $A$-module $M$ (with a change in degree convention).
\item  In particular, for a vector bundle over a smooth manifold $N$, by setting $A=C^{\infty}(N)$ and $M=\Gamma E$, we get the multiderivations of the vector bundle $E$ (defined in \cite{CM08}), i.e. for $n\geq 0$
$$\mathfrak{Der}^n_{\phi}(M,\beta)= Der^n(E).$$
\end{enumerate}
\end{Rem}
Next, we extend the Lie bracket of $Der_{\phi}(M,\beta)$ to a graded Lie bracket on the space of $(\phi,\beta)$-multiderivations of $M$
$$\mathfrak{Der}^*_{\phi}(M,\beta):=\oplus_{n \geq 0} \mathfrak{Der}^n_{\phi}(M,\beta).$$
Let $D_1\in \mathfrak{Der}^p_{\phi}(M,\beta)$, and $D_2\in \mathfrak{Der}^q_{\phi}(M,\beta)$, then define a bracket as follows:
\begin{equation}\label{GLB}
[D_1, D_2] := (-1)^{pq} D_1 \circ D_2 - D_2 \circ D_1.
\end{equation}
In the above expression, the product $D_1\circ D_2$ is given by the expression below for any $x_0,\cdots,x_p,\cdots,x_{p+q}$.
\begin{equation}\label{Defofcirc}
\begin{split}
&(D_1 \circ D_2)(x_0, x_1, \ldots, x_{p+q})\\ &= \textstyle{\sum\limits_{\tau \in Sh (q+1, p)} (-1)^{|\tau|}
D_1 \big(D_2 (x_{\tau(0)}, \ldots, x_{\tau (q)}),\beta^q(x_{\tau (q+1)}) , \ldots, \beta^q(x_{\tau (p+q))}) \big)},
\end{split}
\end{equation}
Here, by $Sh (q+1, p)$ we denote the $(q+1,p)$ shuffles in $S_{q+p+1}$ (the symmetric group on the set $\{1,\cdots,p+q+1\}$), and for any permutation $\tau\in S_{q+p+1}$, $|\tau|$ denotes the signature of the permutation $(\tau)$ .

It follows that the bracket $[-,-]:\mathfrak{Der}^*_{\phi}(M,\beta)\times \mathfrak{Der}^*_{\phi}(M,\beta)\rightarrow \mathfrak{Der}^*_{\phi}(M,\beta)$ is a graded Lie-bracket of degree $0$ (see \cite{AEM11} for details). 
For any $a\in A,$ and $x_0,\cdots,x_{p+q}\in L$, let us consider the following expression
$$[D_1,D_2](x_0, x_1, \ldots, a.x_{p+q})=\Big((-1)^{pq}D_1\circ D_2-D_2\circ D_1\Big)(x_0, x_1, \ldots, a.x_{p+q}).$$
Let us denote by $Sh^1(q+1,p)$, the set of $(q+1,p)$-shuffles of the set $\{0,1,\cdots,p+q\}$ fixing $p+q$, and denote by $Sh^2(q+1,p)$, the set of $(q+1,p)$-shuffles, which send $q\mapsto p+q$. Clearly, $Sh^1(q+1,p)\cup Sh^2(q+1,p)= Sh(q+1,p)$. Then using equation \eqref{Defofcirc}, we get the following equation:
\begin{equation}\label{P1}
\begin{split}
&(D_1 \circ D_2)(x_0, x_1, \ldots, a.x_{p+q})\\ 
&= \textstyle{\sum\limits_{\tau \in Sh^1 (q+1, p)} (-1)^{|\tau|}
D_1 \big(D_2 (x_{\tau(0)}, \ldots, x_{\tau (q)}),\beta^q(x_{\tau (q+1)}) , \ldots, \beta^q(x_{\tau(p+q-1)}),\beta^q(a.x_{(p+q)}) \big)}\\
&+\textstyle{\sum\limits_{\tau \in Sh^2 (q+1, p)} (-1)^{|\tau|}
D_1 \big(D_2 (x_{\tau(0)}, \ldots,x_{\tau(q-1)}, a.x_{(p+q)}),\beta^q(x_{\tau (q+1)}) , \ldots, \beta^q(x_{\tau(p+q)}) \big)}\\
&= \textstyle{\sum\limits_{\tau \in Sh^1(q+1, p)} (-1)^{|\tau|}
\Big(\phi^{p+q}(a).D_1 \big(D_2 (x_{\tau(0)}, \ldots, x_{\tau (q)}),\beta^q(x_{\tau (q+1)}) , \ldots,\beta^q(x_{(p+q)}) \big)}\\
&+\textstyle{\sigma_{D_1} \big(D_2 (x_{\tau(0)}, \ldots, x_{\tau (q)}),\beta^q(x_{\tau (q+1)}) , \ldots, \beta^q(x_{\tau(p+q-1)})\big)(\phi^q(a)). \beta^{p+q}(x_{(p+q)}) \Big)}\\
&+\textstyle{\sum\limits_{\tau \in Sh^2(q+1, p)} (-1)^{|\tau|}
D_1 \big(\phi^q(a).D_2 (x_{\tau(0)}, \ldots,x_{(p+q)}),\beta^q(x_{\tau (q+1)}) , \ldots, \beta^q(x_{\tau(p+q)}) \big)}\\
&+\textstyle{\sum\limits_{\tau \in Sh^2(q+1, p)} (-1)^{|\tau|}
D_1 \big(\sigma_{D_2} (x_{\tau(0)}, \ldots,x_{\tau(q-1)})(a).\beta^q(x_{p+q}),\beta^q(x_{\tau (q+1)}) , \ldots, \beta^q(x_{\tau(p+q)}) \big)}
\end{split}
\end{equation}

Let us denote the set $Sh^2(q+1,p)$ simply by $Sh^2$. The expression in the second summation appeared on the right hand side of equation \eqref{P1} can be written as $B_1+B_2$, where 
\begin{align*}
B_1&=\textstyle{\sum\limits_{\tau \in Sh^2 } (-1)^{|\tau|}
\phi^{p+q}(a).D_1 \big(D_2 (x_{\tau(0)}, \ldots,x_{(p+q)}),\beta^q(x_{\tau (q+1)}) , \ldots, \beta^q(x_{\tau(p+q)}) \big)},\\
B_2&=\textstyle{\sum\limits_{\tau \in Sh^2} (-1)^{|\tau|+p}\sigma_{D_1}(\beta^q(x_{\tau (q+1)}) , \ldots, \beta^q(x_{\tau(p+q)})(\phi^q(a)).\beta^p(D_2 (x_{\tau(0)}, \ldots,x_{(p+q)}))}.
\end{align*}

Next, the expression in the third summation appeared on the right hand side of equation \eqref{P1} can be written as $C_1+C_2$, where
\begin{align*}
C_1&=\textstyle{\sum\limits_{\tau \in Sh^2 } (-1)^{|\tau|}\phi^p(\sigma_{D_2} (x_{\tau(0)}, \ldots,x_{\tau(q-1)})(a)).D_1\big(\beta^q(x_{p+q}),\beta^q(x_{\tau (q+1)}) , \ldots, \beta^q(x_{\tau(p+q)}) \big)},\\
C_2&=\textstyle{\sum\limits_{\tau \in Sh^2 } (-1)^{|\tau|+p}\sigma_{D_1}\big(\beta^q(x_{\tau (q+1)}) , \ldots, \beta^q(x_{\tau(p+q)}) \big)\sigma_{D_2} (x_{\tau(0)}, \ldots,x_{\tau(q-1)})(a).\beta^{p+q}(x_{p+q}).}
\end{align*}

Similarly, 
\begin{equation}\label{IIndpart}
\begin{split}
&\textstyle{(D_2 \circ D_1)(x_0, x_1, \ldots, a.x_{p+q})}\\ 
&= \textstyle{\sum\limits_{\tau \in Sh^1(p+1,q)} (-1)^{|\tau|}
\Big(\phi^{p+q}(a).D_2 \big(D_1 (x_{\tau(0)}, \ldots, x_{\tau (p)}),\beta^p(x_{\tau (p+1)}) , \ldots,\beta^p(x_{(p+q)}) \big)}\\
&+\textstyle{\sigma_{D_2} \big(D_1 (x_{\tau(0)}, \ldots, x_{\tau (p)}),\beta^p(x_{\tau (p+1)}) , \ldots, \beta^p(x_{\tau(p+q-1)})\big)(\phi^p(a)). \beta^{p+q}(x_{(p+q)}) \Big)}\\
&+\textstyle{\sum\limits_{\tau \in Sh^2(p+1,q)} (-1)^{|\tau|}
D_2 \big(\phi^p(a).D_1 (x_{\tau(0)}, \ldots,x_{\tau(p-1)},x_{(p+q)}),\beta^p(x_{\tau (p+1)}) , \ldots, \beta^p(x_{\tau(p+q)}) \big)}\\
&+\textstyle{\sum\limits_{\tau \in Sh^2(p+1,q)} (-1)^{|\tau|}
D_2 \big(\sigma_{D_1} (x_{\tau(0)}, \ldots,x_{\tau(p-1)})(a).\beta^p(x_{p+q}),\beta^p(x_{\tau (p+1)}) , \ldots, \beta^p(x_{\tau(p+q)}) \big)}\\
\end{split}
\end{equation}

Let us denote $\textstyle{Sh^2(p+1,q)}$ simply by $\bar{Sh^2}$. The expression in the second summation appeared on the right hand side of equation \eqref{IIndpart} can be written as $J_1+J_2$, where
\begin{equation*}
\begin{split}
J_1&=\textstyle{\sum\limits_{\tau \in \bar{Sh^2}} (-1)^{|\tau|}
\phi^{p+q}(a).D_2 \big(D_1 (x_{\tau(0)}, \ldots,x_{(p+q)}),\beta^p(x_{\tau (p+1)}) , \ldots, \beta^p(x_{\tau(p+q)}) \big)}\\
J_2&=\textstyle{\sum\limits_{\tau \in \bar{Sh^2}} (-1)^{|\tau|+q}\phi^p\big(\sigma_{D_2}(x_{\tau (q+1)} , \ldots, x_{\tau(p+q)})(a)\big).\beta^q\big(D_1(x_{\tau(0)}, \ldots,x_{\tau(p-1)},x_{(p+q)})\big)} \\
\end{split}
\end{equation*}

The expression in the third summation appeared on the right hand side of equation \eqref{IIndpart} can be written as
\begin{equation*}
\begin{split}
K_1&=\textstyle{\sum\limits_{\tau \in \bar{Sh^2}} (-1)^{|\tau|}\phi^q(\sigma_{D_1} (x_{\tau(0)}, \ldots,x_{\tau(p-1)})(a)).D_2\big(\beta^p(x_{p+q}),\beta^p(x_{\tau (p+1)}) , \ldots, \beta^p(x_{\tau(p+q)}) \big)}\\
K_2&=\textstyle{\sum\limits_{\tau \in \bar{Sh^2}} (-1)^{|\tau|+q}\sigma_{D_2}\big(\beta^p(x_{\tau (p+1)}) , \ldots, \beta^p(x_{\tau(p+q)}) \big)\sigma_{D_1} (x_{\tau(0)}, \ldots,x_{\tau(p-1)})(a).\beta^{p+q}(x_{p+q})}
\end{split}
\end{equation*}

Next, using the properties of multiderivations $D_1, D_2$ and there symbols $\sigma_{D_1}, \sigma_{D_2}$, let us observe the following:
\begin{enumerate}
\item $(-1)^{pq}B_2-K_1=0$, and 
\item $(-1)^{pq}C_1-J_2=0.$
\end{enumerate} 
Consequently, using equations \eqref{P1} and \eqref{IIndpart}, we get the following identity:
\begin{align*}
&\textstyle{[D_1,D_2](x_0,\cdots,x_{p+q-1},a.x_{p+q})}\\&
=\textstyle{\phi^{p+q}(a).[D_1,D_2](x_0,x_1,\cdots,x_{p+q-1},x_{p+q})
+\sigma_{[D_1,D_2]}(x_0,x_1,\cdots,x_{p+q-1})(a).\beta^{p+q}(x_{p+q})}.
\end{align*}

In the above identity, the map 
$\sigma_{[D_1, D_2]}:\wedge^{p+q}M\rightarrow Der_{\phi^{p+q}}(A)$
is given by the following equation
\begin{equation}\label{symbofpr}
\sigma_{[D_1, D_2]} = (-1)^{pq} \sigma_{D_1} \odot D_2 - \sigma_{D_2} \odot D_1 + \{\sigma_{D_1}, \sigma_{D_1}\}.
\end{equation}

The terms appeared in the right hand side of the above equation are given as follows. For any $x_1,\ldots,x_{p+q}\in M$, and $a\in A$,

\begin{enumerate}[label=(\roman*)]
\item $\textstyle{\{\sigma_{D_1}, \sigma_{D_1}\}}$ is defined by 
\begin{align*}
&\textstyle{\{\sigma_{D_1}, \sigma_{D_2}\}(x_1, \ldots, x_{p+q})(a)}\\& = \textstyle{\sum\limits_{\tau \in Sh(p,q)} (-1)^{|\tau|} \Big(\sigma_{D_1}\big(\beta^q(x_{\tau(1)}), \ldots, \beta^q(x_{\tau (p)})\big) \sigma_{D_2} (x_{\tau(p+1)}, \ldots, x_{\tau (p+q)})}\\
&-\sigma_{D_2}\big(\beta^p(x_{\tau(p+1)}), \ldots, \beta^p(x_{\tau (p+q)})\big) \sigma_{D_1} (x_{\tau(1)}, \ldots, x_{\tau (p)})\Big)(a),\\
\end{align*}

\item  $\sigma_{D_1}\odot D_2$ is defined by 
\begin{align*}
&\textstyle{(\sigma_{D_1} \odot D_2)(x_1, \ldots, x_{p+q})(a)}\\ &= \textstyle{\sum\limits_{\tau \in Sh (q+1, p-1)} (-1)^{|\tau|}
\sigma_{D_1} \big(D_2 (x_{\tau(1)}, \ldots, x_{\tau (q+1)}),\beta^q(x_{\tau (q+2)}) , \ldots, \beta^q(x_{\tau (p+q))} \big)(\phi^q(a))}.
\end{align*}
\end{enumerate} 

Hence, for $\textstyle{D_1\in \mathfrak{Der}^p_{\phi}(M,\beta)}$, and $\textstyle{D_2\in \mathfrak{Der}^q_{\phi}(M,\beta)}$, the bracket $\textstyle{[D_1,D_2]\in \mathfrak{Der}^{p+q}_{\phi}(M,\beta)}$ with the symbol map $\sigma_{[D_1,D_2]}$. Therefore the space of $(\phi,\beta)$-multiderivation $ \mathfrak{Der}^*_{\phi}(M,\beta)$ is closed under the graded Lie bracket given by equation \eqref{GLB}.
\begin{Thm}\label{GLA}
Let $M$ be an $A$-module and $\textstyle{\beta:M\rightarrow M}$ be a $\phi$-function linear map. Then the space of $(\phi,\beta)$-multiderivations of $M$ has a graded Lie algebra structure, where the graded Lie bracket is given by equation \eqref{GLB}.
\end{Thm}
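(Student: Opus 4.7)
The plan is to verify the three axioms of a graded Lie algebra in turn: closure (that $[D_1,D_2]\in\mathfrak{Der}^{p+q}_{\phi}(M,\beta)$ with a suitable symbol), graded skew-symmetry, and the graded Jacobi identity. A substantial portion of the closure argument is already present in the extended computation preceding the theorem, where the derivation identity in the last slot was verified and the symbol $\sigma_{[D_1,D_2]}$ was produced explicitly via equation \eqref{symbofpr}.

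To complete closure, I would check the remaining items of the multiderivation definition. The $\beta$-equivariance of $[D_1,D_2]$ reduces, after pulling the shuffle sum outside, to the $\beta$-equivariance of $D_1$ and $D_2$ together with $\beta^q\circ\beta=\beta\circ\beta^q$; the iterated $\beta$-powers distribute uniformly across every argument position, so an outer $\beta$ factors cleanly. Condition (ii) on $\sigma_{[D_1,D_2]}$ then follows from the analogous properties of $\sigma_{D_1}$ and $\sigma_{D_2}$, while condition (iii) follows by a calculation parallel to the one already used to derive the explicit formula for $\sigma_{[D_1,D_2]}$, but applied to the second-to-last slot instead of the last.

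Graded skew-symmetry is immediate from \eqref{GLB}:
\[
[D_2,D_1] = (-1)^{qp}D_2\circ D_1 - D_1\circ D_2 = -(-1)^{pq}\bigl((-1)^{pq}D_1\circ D_2 - D_2\circ D_1\bigr) = -(-1)^{pq}[D_1,D_2].
\]
For the graded Jacobi identity my plan is to follow the classical Gerstenhaber/Nijenhuis--Richardson strategy: prove that the composition product $\circ$ is graded right-symmetric (pre-Lie), i.e.\ that the associator $(D_1\circ D_2)\circ D_3 - D_1\circ(D_2\circ D_3)$ is graded-symmetric in $(D_2,D_3)$. Once this is established, the graded Jacobi identity for $[-,-]$ follows by a formal expansion of the cyclic sum of double brackets using only the definition \eqref{GLB}.

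The main obstacle will be the pre-Lie identity for $\circ$. Expanding $D_1\circ(D_2\circ D_3)$ via \eqref{Defofcirc} splits into two types of terms: \emph{nested} terms in which $D_3$'s output becomes one of the arguments of $D_2$ and $D_2$'s output then becomes one of the arguments of $D_1$, and \emph{adjacent} terms in which the outputs of $D_2$ and $D_3$ occupy two separate argument slots of $D_1$. The nested terms agree exactly with the corresponding nested terms in $(D_1\circ D_2)\circ D_3$ and so cancel in the associator, while the adjacent terms are manifestly graded-symmetric under swapping $D_2$ and $D_3$. The bookkeeping requires careful tracking of shuffle signs, together with the $\beta$-powers that appear when one multiderivation is plugged into another; this shuffle and $\beta$-power combinatorics is the delicate part of the argument, and is essentially the same combinatorics handled in \cite{AEM11} for the hom-Lie setting.
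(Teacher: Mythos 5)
Your proposal is correct and takes essentially the same route as the paper: the substantive content of the paper's argument is precisely the closure computation (the Leibniz identity in the last slot together with the explicit symbol \eqref{symbofpr}), while the graded skew-symmetry and graded Jacobi identity of the bracket \eqref{GLB} are delegated to \cite{AEM11}. Your only deviation is that you sketch the Nijenhuis--Richardson pre-Lie argument for the Jacobi identity rather than citing it, and that argument does go through here since condition (i) ($\beta$-equivariance of the multiderivations) allows the $\beta$-powers to be redistributed so that the nested terms cancel and the adjacent terms are graded-symmetric in $(D_2,D_3)$.
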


In the next result, we describe a hom-Lie-Rinehart algebra structures in terms of the graded Lie algebra obtained above.
\begin{Prop}\label{LR-1der}
Let $L$ be an ${A}$-module and $\textstyle{\alpha_L:L\rightarrow L}$ be a $\phi$-function linear map. Then there is a one-to-one correspondence between hom-Lie-Rinehart algebra structures on the pair $(L,\alpha_L)$ and elements $\textstyle{\mathfrak{m}\in \mathfrak{Der}^1_{\phi}(L,\alpha_L)}$ satisfying $\textstyle{[\mathfrak{m},\mathfrak{m}]=0}$.
\end{Prop}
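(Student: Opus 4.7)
The plan is to translate between the data of a hom-Lie-Rinehart structure on $(L, \alpha_L)$ and the data of a degree-one $(\phi, \alpha_L)$-multiderivation with vanishing self-bracket, verifying that the axioms on each side correspond term-by-term.

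Given $\mathfrak{m} \in \mathfrak{Der}^1_\phi(L, \alpha_L)$ with symbol $\sigma_\mathfrak{m}$, I would define $[x,y]_L := \mathfrak{m}(x,y)$ and $\rho_L(x) := \sigma_\mathfrak{m}(x)$. With this identification, Definition \ref{hom-LR}(iv) and (v) follow directly from axioms (iii) and (iv) of the multiderivation; the $\alpha_L$-compatibility of the bracket (part of \ref{hom-LR}(ii)) is axiom (i) of the multiderivation; Definition \ref{hom-LR}(i) is the standing $\phi$-linearity hypothesis on $\alpha_L$; and the $\phi$-equivariance half of the representation axiom \ref{Rep-hom-Lie} matches axiom (ii) of the multiderivation. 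Thus the only nontrivial content that must come from $[\mathfrak{m}, \mathfrak{m}] = 0$ is the hom-Jacobi identity together with the bracket-compatibility half of \ref{Rep-hom-Lie}.

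The key computation is to specialize the graded bracket \eqref{GLB} to $p = q = 1$, giving $[\mathfrak{m}, \mathfrak{m}] = -2\,\mathfrak{m}\circ\mathfrak{m}$. Expanding via \eqref{Defofcirc} over the three $(2,1)$-shuffles of $\{0,1,2\}$ yields
\begin{align*}
(\mathfrak{m}\circ\mathfrak{m})(x_0, x_1, x_2) &= \mathfrak{m}(\mathfrak{m}(x_0, x_1), \alpha_L(x_2)) - \mathfrak{m}(\mathfrak{m}(x_0, x_2), \alpha_L(x_1)) \\
&\quad + \mathfrak{m}(\mathfrak{m}(x_1, x_2), \alpha_L(x_0)),
\end{align*}
and after applying skew-symmetry of the bracket this vanishing is precisely the hom-Jacobi identity. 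On the symbol side, using \eqref{symbofpr} at $p = q = 1$ the $\sigma_\mathfrak{m} \odot \mathfrak{m}$ contributions involve only the trivial $(2,0)$-shuffle and collapse to $\sigma_\mathfrak{m}([x_1,x_2])(\phi(a))$, while $\{\sigma_\mathfrak{m}, \sigma_\mathfrak{m}\}$ unfolds over the two $(1,1)$-shuffles and doubles to $2\bigl(\sigma_\mathfrak{m}(\alpha_L(x_1))\,\sigma_\mathfrak{m}(x_2) - \sigma_\mathfrak{m}(\alpha_L(x_2))\,\sigma_\mathfrak{m}(x_1)\bigr)(a)$. Setting $\sigma_{[\mathfrak{m},\mathfrak{m}]}=0$ then produces exactly $\rho_L([x_1,x_2])\circ\phi = \rho_L(\alpha_L(x_1))\,\rho_L(x_2) - \rho_L(\alpha_L(x_2))\,\rho_L(x_1)$, completing the representation axiom.

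The converse direction runs the same dictionary in reverse: given a hom-Lie-Rinehart algebra, the bracket $\mathfrak{m}(x,y) := [x,y]_L$ with symbol $\rho_L$ satisfies axioms (i)-(iv) of a degree-one $(\phi, \alpha_L)$-multiderivation by virtue of Definition \ref{hom-LR}(ii), (iv), (v) together with \ref{Rep-hom-Lie}(i), and the hom-Jacobi plus representation identities reassemble into $[\mathfrak{m}, \mathfrak{m}] = 0$ via the same shuffle expansion. The main obstacle is pure bookkeeping: correctly tracking signs and shuffle sums when specialising \eqref{Defofcirc} and \eqref{symbofpr}, in particular making sure the trivial $(2,0)$-shuffle and the two $(1,1)$-shuffles recombine with the correct factor of two so that the symbol identity matches \ref{Rep-hom-Lie}(ii) on the nose.
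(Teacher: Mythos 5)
Your proposal is correct and follows essentially the same route as the paper: identify the bracket with $\mathfrak{m}$ and the anchor with the symbol $\sigma_{\mathfrak{m}}$, match the module/Leibniz axioms with the multiderivation axioms, and observe that $[\mathfrak{m},\mathfrak{m}]=0$ encodes the hom-Jacobi identity (via the $p=q=1$ shuffle expansion giving $[\mathfrak{m},\mathfrak{m}]=-2\,\mathfrak{m}\circ\mathfrak{m}$) together with the representation condition on the symbol. You are in fact somewhat more explicit than the paper, which leaves the symbol-side verification and the converse checks implicit.
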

\begin{proof}
Let $\textstyle{(\mathcal{L},\alpha_L)}$ be a hom-Lie-Rinehart algebra over $(A,\phi)$. Let us define a bilinear map $\mathfrak{m} : L \times L \rightarrow L$ by $\mathfrak{m}(x, y) := [x, y]_L$, for $x,y \in L$. By definition for any $x,y\in L$ and $a\in A$, we get 
\begin{equation}\label{id1}
\textstyle{\mathfrak{m}(x,a.y)=\phi(a).\mathfrak{m}(x,y)+\rho_L(x)(a).\alpha_L(y)}.
\end{equation}

By equation \eqref{id1}, it follows that $\mathfrak{m}$ is a $1$-degree $(\phi,\alpha_L)$-derivation of the $A$-module $L$, i.e. $\mathfrak{m} \in \mathfrak{Der}^1_{\phi}(L,\alpha_L)$ with symbol $\sigma_{\mathfrak{m}}=\rho_L:L\rightarrow Der_{\phi}(A)$. Moreover, from the definition of the graded Lie bracket \eqref{GLB} it is clear that
\begin{align*}
[\mathfrak{m}, \mathfrak{m}](x,y,z) &= -2 (\mathfrak{m} \circ \mathfrak{m}) (x,y,z) \\
&= \textstyle{2 \bigg{\{}[\alpha_L(x),[y,z]_L]_L + [\alpha_L(y), [z,x]_L]_L + [\alpha_L(z), [x,y]_L]_L\bigg{\}}}=0.
\end{align*}

Conversely, let us start with an element $\mathfrak{m} \in \mathfrak{Der}^{1}_{\phi}(L,\alpha_L)$ (with symbol $\sigma_{\mathfrak{m}}$) satisfying the identity: $[\mathfrak{m},\mathfrak{m}] = 0$. Let us define a bracket 
$[-,-]_L:L\times L\rightarrow L$ as follows
 $$[x,y]_L=\mathfrak{m}(x,y)$$
  for any $x,y\in L$. Also, define a linear map $\rho_L:=\sigma_{\mathfrak{m}}:L\rightarrow Der_{\phi}(A)$. Then it follows that $(A, L,[-,-]_L,\phi,\alpha_L,\rho_L)$ is a hom-Lie-Rinehart algebra over $(A,\phi)$. 
\end{proof}

Thus, we have a description of hom-Lie-Rinehart algebra structures on the pair $(L,\alpha_L)$ in terms of the graded Lie algebra structure on $(\phi,\alpha_L)$-multiderivations of $L$, obtained by Theorem \ref{GLA}.

\subsection{Deformation complex}\label{subsection-def-complex}
Let $(\mathcal{L},\alpha_L)$ be a hom-Lie-Rinehart algebra over $(A,\phi)$. Then by Proposition \ref{LR-1der}, this hom-Lie-Rinehart algebra structure on $(L,\alpha_L)$ corresponds to an element $\mathfrak{m}\in \mathfrak{Der}^1_{\phi}(L,\alpha_L)$ such that $[\mathfrak{m},\mathfrak{m}] = 0.$

Let us define a cochain complex $(C^*_{def}(\mathcal{L},\alpha_L),\delta)$, where 
$$ \textstyle{C^*_{def}(\mathcal{L},\alpha_L) :=\oplus_{n\geq 1}~ C^n_{def}(\mathcal{L},\alpha_L)},~~\mbox{and}~~~ \textstyle{C^n_{def}(\mathcal{L},\alpha_L):= \mathfrak{Der}^{n-1}_{\phi}(L,\alpha_L)}.$$ 
Here, the differential $$\delta : C^n_{def}(\mathcal{L},\alpha_L) \rightarrow C^{n+1}_{def}(\mathcal{L},\alpha_L)$$ is given by
$$\delta (D) = [\mathfrak{m},D],$$ for $D \in C^n_{def}(\mathcal{L},\alpha_L)$. In particular, for any $x_0,\cdots, x_{n}\in L$, and $D\in \mathfrak{Der}^{n-1}_{\phi}(L,\alpha_L)$, the coboundary expression is given as follows.
\begin{equation} \label{defor-diff-1}
\begin{split}
&\delta(D) (x_0,x_1, \ldots,x_n) \\
=& \sum_{i=1}^n (-1)^i \mathfrak{m}\big(\alpha_L^{n-1}(x_i), D(x_0, \ldots, \widehat{x}_i, \ldots, x_n)\big) \\
& + \sum_{0 \leq i \leq j \leq n} (-1)^{i+j} D\big(\mathfrak{m}(x_i, x_j), \alpha_L(x_0), \ldots, \widehat{X}_i, \ldots, \widehat{X}_j, \ldots, \alpha_L(x_n)\big). 
\end{split}
\end{equation}
Next, let us observe that $\mathfrak{m}\in \mathfrak{Der}^1_{\phi}(L,\alpha_L)$ satisfies $[\mathfrak{m},\mathfrak{m}]=0$, therefore graded Jacobi identity for the graded Lie bracket $[-,-]:\mathfrak{Der}^*_{\phi}(L,\alpha_L)\times \mathfrak{Der}^*_{\phi}(L,\alpha_L)\rightarrow \mathfrak{Der}^*_{\phi}(L,\alpha_L)$ implies that $\delta^2=0$. 
\begin{Prop}
Let $(\mathcal{L},\alpha_L)$ be a hom-Lie-Rinehart algebra over $(A,\phi)$ and  $\mathfrak{m}$ be the corresponding $1$-degree $(\phi,\alpha_L)$-derivation satisfying $[\mathfrak{m},\mathfrak{m}]=0$. Then with the coboundary operator $\delta : C^n_{def} (L) \rightarrow C^{n+1}_{def} (L)$ defined by $\delta(D) = [\mathfrak{m},D]$, for $D\in C^n_{def} (L,\alpha_L)$, the cochain complex $(C^*_{def} (\mathcal{L},\alpha_L) , \delta)$ is a differential graded Lie algebra (DGLA with a shift in degree). 
\end{Prop}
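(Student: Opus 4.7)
The plan is to verify the three DGLA axioms in turn, leveraging what has already been established. First, the underlying graded Lie algebra structure is inherited directly from Theorem \ref{GLA}: the space $\mathfrak{Der}^*_{\phi}(L,\alpha_L)$ is a graded Lie algebra under the bracket \eqref{GLB}, and the reindexing $C^n_{def}(\mathcal{L},\alpha_L) := \mathfrak{Der}^{n-1}_{\phi}(L,\alpha_L)$ merely shifts the grading by one (so the bracket has degree $-1$ on $C^*_{def}$, which is exactly the "shift in degree" flagged in the statement). Graded antisymmetry and the graded Jacobi identity for $[-,-]$ transfer verbatim, since a uniform shift in grading preserves both.

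Next I would verify that $\delta$ is indeed a differential of degree $+1$ on $C^*_{def}$. Since $\mathfrak{m} \in \mathfrak{Der}^1_{\phi}(L,\alpha_L) = C^2_{def}(\mathcal{L},\alpha_L)$ has shifted degree $1$, the map $\delta = [\mathfrak{m}, -]$ raises the $C^*_{def}$-grading by $1$. To obtain $\delta^2 = 0$, I would apply the graded Jacobi identity to the triple $(\mathfrak{m},\mathfrak{m},D)$ for any $D \in C^n_{def}(\mathcal{L},\alpha_L)$:
\begin{equation*}
[\mathfrak{m},[\mathfrak{m},D]] = [[\mathfrak{m},\mathfrak{m}],D] - [\mathfrak{m},[\mathfrak{m},D]],
\end{equation*}
so $2\,\delta^2(D) = [[\mathfrak{m},\mathfrak{m}],D] = 0$ by the hypothesis $[\mathfrak{m},\mathfrak{m}] = 0$. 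Since $R$ has characteristic zero, this gives $\delta^2 = 0$.

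Finally, to check that $\delta$ is a graded derivation of the bracket, take $D_1 \in C^p_{def}(\mathcal{L},\alpha_L)$ and $D_2 \in C^q_{def}(\mathcal{L},\alpha_L)$ and apply the graded Jacobi identity to $(\mathfrak{m},D_1,D_2)$. Keeping careful track of signs (with $D_1$ of shifted degree $p$ or equivalently multiderivation-degree $p-1$), this yields
\begin{equation*}
[\mathfrak{m},[D_1,D_2]] = [[\mathfrak{m},D_1],D_2] + (-1)^{p}\,[D_1,[\mathfrak{m},D_2]],
\end{equation*}
which is exactly $\delta[D_1,D_2] = [\delta D_1, D_2] + (-1)^{p}[D_1,\delta D_2]$, the graded Leibniz rule with respect to the shifted grading.

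The main technical obstacle, in my view, is purely bookkeeping: making sure the sign conventions induced by the degree shift are consistent, so that the graded Jacobi identity from Theorem \ref{GLA} (phrased in the original $\mathfrak{Der}$-grading) produces the right Leibniz sign in the shifted complex $C^*_{def}$. Once this is reconciled, the three axioms follow from the single input $[\mathfrak{m},\mathfrak{m}] = 0$ together with the graded Jacobi identity, and no new computation involving the explicit formulas for $\circ$, $\odot$, or $\{-,-\}$ is needed.
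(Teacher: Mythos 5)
Your proposal follows the same route as the paper: the graded Lie algebra of Theorem \ref{GLA} is transported to $C^{*}_{def}(\mathcal{L},\alpha_L)=\mathfrak{Der}^{*-1}_{\phi}(L,\alpha_L)$, and $\delta^{2}=0$ is exactly the graded Jacobi identity applied to $(\mathfrak{m},\mathfrak{m},D)$ together with $[\mathfrak{m},\mathfrak{m}]=0$ (this is all the paper itself verifies). The one flaw is precisely the bookkeeping you flag: the signs do \emph{not} transfer verbatim under the shift, because the bracket satisfies graded antisymmetry and Jacobi with respect to the multiderivation degree, not the shifted one. For $D_1\in C^{p}_{def}(\mathcal{L},\alpha_L)$, i.e.\ of multiderivation degree $p-1$, and $\mathfrak{m}$ of degree $1$, the Jacobi identity gives
\[
[\mathfrak{m},[D_1,D_2]]=[[\mathfrak{m},D_1],D_2]+(-1)^{p-1}\,[D_1,[\mathfrak{m},D_2]],
\]
so the Leibniz sign is $(-1)^{p-1}$ rather than the $(-1)^{p}$ you state, and likewise antisymmetry reads $[D_1,D_2]=-(-1)^{(p-1)(q-1)}[D_2,D_1]$ in terms of shifted degrees. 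In other words, the DGLA structure lives in the unshifted $\mathfrak{Der}$-grading, which is exactly what the statement means by ``DGLA with a shift in degree''; with that reading your argument is complete and coincides with the paper's.
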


Let us denote by $H^*_{def} (\mathcal{L},\alpha_L)$, the cohomology of the cochain complex $(C^*_{def}(\mathcal{L},\alpha_L), \delta)$. Next, we show that the cohomology $H^*_{def} (\mathcal{L},\alpha_L)$ is the {\sf deformation cohomology} of the hom-Lie-Rinehart algebra $(\mathcal{L},\alpha_L)$.


\section{Deformation of Hom-Lie-Rinehart algebras}
In view of Proposition \ref{LR-1der}, we now consider the hom-Lie-Rinehart algebra structure on $(L,\alpha_L)$ over $(A,\phi)$ as an element $\mathfrak{m}\in \mathfrak{Der}^1_{\phi}(L,\alpha_L)$ satisfying $[\mathfrak{m},\mathfrak{m}]=0$. Here, we denote by $R[[t]]$, the space of formal power series ring with parameter $t$. 
\begin{Def}
  A deformation  of a hom-Lie-Rinehart algebra structure on $(L,\alpha_L)$ (over $(A,\phi)$) which is given via $\mathfrak{m}\in \mathfrak{Der}^1_{\phi}(L,\alpha_L)$, is a $R[[t]]$-bilinear map 
$$\mathfrak{m}_t:L[[t]] \otimes L[[t]]\rightarrow L[[t]]$$ 
which is $\mbox{given by}~\mathfrak{m}_t(x,y)=\sum_{i\geq0} t^i m_i(x,y) ~\mbox{for} ~~m_0=\mathfrak{m}~ \mbox{and}~ m_i\in \mathfrak{Der}^1_{\phi}(L,\alpha_L)~ \mbox{for}~ i> 0,$ and 
satisfy $[[\mathfrak{m}_t,\mathfrak{m}_t]]=0$. Here, $[[-,-]]$ is a graded Lie algebra bracket on $\mathfrak{Der}^*_{\phi_t}(L[[t]],\alpha_{L_t})$ and the map $\phi_t:A[[t]]\rightarrow A[[t]]$ is an algebra homomorphism extending the algebra homomorphism $\phi:A\rightarrow A$ such that $\phi_t(t)=t$. 
\end{Def}

\begin{Rem}
Note that $\mathfrak{m}_t=\sum_{i\geq0} t^i m_i$ is a 1-degree $(\phi_t,\alpha_t)$ multiderivation of $L[[t]]$ with the symbol $\sigma_{\mathfrak{m}_t}$ given by 
$$\sigma_{\mathfrak{m}_t}(x):=\sum_{i\geq0} t^i \sigma_{m_i}(x):L[[t]]\rightarrow Der_{\phi_t}\big(A[[t]]\big).$$
Since $\mathfrak{m}_t$ satisfies $[[\mathfrak{m}_t,\mathfrak{m}_t]]=0,$ it corresponds to a hom-Lie-Rinehart algebra structure on $(L[[t]],\alpha_t)$ over $\big(A[[t]],\phi_t\big)$.
\end{Rem}

If $D \in ker(\delta_1)$ then $\delta(D)=[\mathfrak{m},D]=0~\mbox{for}~D\in Der_{\phi}(L,\alpha_L)$. i.e.,
$$
\mathfrak{m}(D(x),y)+\mathfrak{m}(x,D(y))= D(\mathfrak{m}(x,y)).
$$
Therefore, $H^1_{def}$ gives set of all $(\phi,\alpha_L)$-derivations on hom-Lie-Rinehart algebra $(\mathcal{L},\alpha_L)$.

Let $\mathfrak{m}_t$ be a deformation of $\mathfrak{m}$. Then 
$$\mathfrak{m}_t(\alpha_L(a),\mathfrak{m}_t(b,c))+\mathfrak{m}_t(\alpha_L(b),\mathfrak{m}_t(c,a))+\mathfrak{m}_t(\alpha_L(c),\mathfrak{m}_t(a,b))=0.$$
Comparing the coefficients of $t^n$ for $n\geq 0$, we get the following equations:
\begin{equation}\label{DefEqu}
\sum_{i,j=0}^n m_i(\alpha_L(a),m_j(b,c))+m_i(\alpha_L(b),m_j(c,a))+m_i(\alpha_L(c),m_j(a,b))=0.
\end{equation}
For $n=1$, equation \eqref{DefEqu} implies $[\mathfrak{m},m_1]=\delta(m_1)=0$, i.e. $m_1$ is a 2-cocycle.

The 2-cochain $m_1$ is called the infinitesimal of the deformation $\mathfrak{m}_t$. More generally, if $m_i=0$ for $1\leq i\leq (n-1)$ and $m_n$ is non zero cochain then $m_n$ is called the $n$-infinitesimal of the deformation $\mathfrak{m}_t$. By the above discussion the following proposition immediately follows.

\begin{Prop}
The infinitesimal of the deformation $\mathfrak{m}_t$ is a 2-cocycle in $C^2_{def}$. More generally, the $n$-infinitesimal is a 2-cocycle. 
\end{Prop}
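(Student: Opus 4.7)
The plan is to unpack the Maurer--Cartan condition $[[\mathfrak{m}_t,\mathfrak{m}_t]] = 0$ as a formal power series identity in $t$ and extract the coefficient of $t^n$.

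First, I would expand
$$[[\mathfrak{m}_t,\mathfrak{m}_t]] \;=\; \sum_{k\geq 0} t^k \sum_{i+j=k}[m_i,m_j],$$
using that the graded Lie bracket on $\mathfrak{Der}^*_{\phi_t}(L[[t]],\alpha_{L_t})$ is the $R[[t]]$-bilinear extension of the one on $\mathfrak{Der}^*_{\phi}(L,\alpha_L)$. The deformation axiom $[[\mathfrak{m}_t,\mathfrak{m}_t]] = 0$ is then equivalent to the family of identities
$$\sum_{i+j=k}[m_i,m_j] \;=\; 0 \qquad \text{in } \mathfrak{Der}^2_\phi(L,\alpha_L),\quad \text{for every } k\geq 0.$$

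Next, under the $n$-infinitesimal hypothesis $m_1 = \cdots = m_{n-1} = 0$, I would specialise the identity at $k = n$:
$$[m_0, m_n] + [m_n, m_0] + \sum_{\substack{i+j = n \\ 1 \leq i,j \leq n-1}} [m_i, m_j] \;=\; 0.$$
Every term in the remaining sum contains at least one factor $m_i$ with $1 \leq i \leq n-1$, hence vanishes by assumption. Since $m_0 = \mathfrak{m}$ and $m_n$ both lie in $\mathfrak{Der}^1_\phi(L,\alpha_L)$, graded antisymmetry with both arguments of degree $1$ gives $[m_n, m_0] = [m_0, m_n]$, reducing the identity to $2[\mathfrak{m}, m_n] = 0$. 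Using that $R$ has characteristic zero (as fixed in Section~2), I can divide by $2$ to obtain $\delta(m_n) = [\mathfrak{m}, m_n] = 0$, which is exactly the $2$-cocycle condition in $C^2_{def}(\mathcal{L},\alpha_L)$. The case $n=1$ is the special instance already recorded in the text just after equation \eqref{DefEqu}.

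There is no substantial obstacle: the argument is a formal consequence of Proposition~\ref{LR-1der} and the DGLA structure on $(C^*_{def}(\mathcal{L},\alpha_L),\delta)$ with $\delta = [\mathfrak{m},-]$. The only point that requires care is the sign bookkeeping, namely verifying that with both $m_0$ and $m_n$ in odd degree, graded antisymmetry produces a factor of $2$ rather than cancellation; this is precisely what licenses the cocycle conclusion in characteristic zero.
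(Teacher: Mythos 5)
Your proposal is correct and follows essentially the paper's own route: both arguments extract the coefficient of $t^n$ from $[[\mathfrak{m}_t,\mathfrak{m}_t]]=0$ and observe that, under the $n$-infinitesimal hypothesis, only the terms involving $m_0=\mathfrak{m}$ and $m_n$ survive, yielding $\delta(m_n)=[\mathfrak{m},m_n]=0$. The only (cosmetic) difference is that you work directly with the graded bracket, whereas the paper writes the same coefficient equation elementwise as the hom-Jacobi identity \eqref{DefEqu}; your explicit handling of the factor $2$ via graded symmetry in odd degree and characteristic zero is a welcome clarification.
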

\begin{Def}
Two deformations $\mathfrak{m}_t$ and $\tilde{\mathfrak{m}_t}$ are said to be equivalent if we have a formal automorphism $$\Phi_t : L[[t]] \rightarrow L[[t]]~~\mbox{ defined as}~~
\Phi_t=id_L+\sum_{i\geq 1}t^i\phi_i$$
where for each $i\geq 1$, $\phi_i:L \rightarrow L$ is a $R$-linear map such that 
$$\phi_i\circ \alpha_L=\alpha_L\circ \phi_i ~~\mbox{and}~
\tilde{\mathfrak{m}_t}(x,y)=\Phi_t^{-1}\mathfrak{m}_t(\Phi_tx,\Phi_ty).
$$
\end{Def}
\begin{Def}
Any deformation equivalent to the deformation $m_0=\mathfrak{m}$ is said to be a trivial deformation.
\end{Def}
\begin{Thm}\label{Eq1}
The cohomology class of the infinitesimal of a deformation $\mathfrak{m}_t$ is determined by the equivalence class of $\mathfrak{m}_t$.
\end{Thm}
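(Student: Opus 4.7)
My strategy is to extract the coefficient of $t^1$ from the equivalence equation $\tilde{\mathfrak{m}}_t(x,y) = \Phi_t^{-1}\mathfrak{m}_t(\Phi_t x, \Phi_t y)$ and to identify the resulting expression for $\tilde{m}_1 - m_1$ with the coboundary of $\phi_1$ in the deformation complex $(C^*_{def}(\mathcal{L},\alpha_L),\delta)$.

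First I would write $\Phi_t = \mathrm{id}_L + t\phi_1 + O(t^2)$ and $\Phi_t^{-1} = \mathrm{id}_L - t\phi_1 + O(t^2)$, and expand both sides of the equivalence equation modulo $t^2$. Using $m_0 = \tilde m_0 = \mathfrak{m}$, the $t^1$ coefficient gives
\[
\tilde m_1(x,y) - m_1(x,y) = \mathfrak{m}(\phi_1(x),y) + \mathfrak{m}(x,\phi_1(y)) - \phi_1(\mathfrak{m}(x,y)).
\]

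Next I would recognize the right-hand side as $\delta(\phi_1)$, viewing $\phi_1$ as an element of $C^1_{def}(\mathcal{L},\alpha_L) = \mathfrak{Der}^0_{\phi}(L,\alpha_L)$. Since $\delta(\phi_1) = [\mathfrak{m},\phi_1]$, I would compute the graded commutator via \eqref{GLB} with $p=1$, $q=0$: the shuffle formula \eqref{Defofcirc} gives $(\mathfrak{m}\circ\phi_1)(x,y) = \mathfrak{m}(\phi_1(x),y) - \mathfrak{m}(\phi_1(y),x)$, which by skew-symmetry of $\mathfrak{m}$ equals $\mathfrak{m}(\phi_1(x),y) + \mathfrak{m}(x,\phi_1(y))$, while $(\phi_1 \circ \mathfrak{m})(x,y) = \phi_1(\mathfrak{m}(x,y))$. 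Together these match the displayed identity, so $\tilde m_1 - m_1 = \delta(\phi_1)$, and hence $[\tilde m_1] = [m_1]$ in $H^2_{def}(\mathcal{L},\alpha_L)$.

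The principal obstacle I anticipate is verifying that $\phi_1$ actually belongs to $\mathfrak{Der}^0_{\phi}(L,\alpha_L)$, rather than being only an $R$-linear endomorphism commuting with $\alpha_L$ (which is all the definition of equivalence explicitly grants). To produce an associated symbol $\sigma_{\phi_1} \in Der_R(A)$ satisfying the Leibniz identity $\phi_1(a x) = \phi(a)\phi_1(x) + \sigma_{\phi_1}(a) x$, I would apply the equivalence equation to pairs of the form $(x, a\cdot y)$ and match $t$-coefficients, exploiting the fact that both $\mathfrak{m}_t$ and $\tilde{\mathfrak{m}}_t$ are genuine $1$-degree $(\phi_t, \alpha_{L_t})$-multiderivations of $L[[t]]$ over $(A[[t]],\phi_t)$. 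Once this Leibniz property for $\phi_1$ is established, $\phi_1 \in C^1_{def}(\mathcal{L},\alpha_L)$ and the cohomological identification above completes the argument.
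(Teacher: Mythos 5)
Your proposal is correct and follows essentially the same route as the paper: expand $\tilde{\mathfrak{m}}_t(x,y)=\Phi_t^{-1}\mathfrak{m}_t(\Phi_t x,\Phi_t y)$, compare coefficients of $t$, and identify $\tilde m_1-m_1$ with $[\mathfrak{m},\phi_1]=\delta(\phi_1)$, which is exactly the paper's (one-line) argument, with your shuffle computation of the bracket filling in the details. Your closing concern that $\phi_1$ must actually lie in $C^1_{def}(\mathcal{L},\alpha_L)$ is a legitimate point the paper silently passes over; just note that in degree $0$ the Leibniz rule required by the paper's definition is untwisted, $\phi_1(a\,x)=a\,\phi_1(x)+\sigma_{\phi_1}(a)\,x$, not $\phi(a)\,\phi_1(x)+\sigma_{\phi_1}(a)\,x$.
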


\begin{proof}
Let $\Phi_t$ represents an equivalence of deformation given by $\mathfrak{m}_t $ and $ \tilde{\mathfrak{m}_t}$. Then we get,
$$
\tilde{\mathfrak{m}_t}(x,y)=\Phi_t^{-1}\mathfrak{m}_t(\Phi_tx,\Phi_ty).
$$
Expanding the above identity and comparing the coefficients of $t$ from both sides of the above equation we have 
$$
m_1-\tilde{m_1}=[\mathfrak{m},\phi_1].
$$
So, cohomology class of infinitesimal of the deformation is determined by the equivalence class of deformation of $\mathfrak{m}_t$.
\end{proof}
\begin{Def}
A hom-Lie-Rinehart algebra is said to be rigid if and only if every deformation of it is trivial.
\end{Def}

\begin{Thm}
A non-trivial deformation of a hom-Lie-Rinehart algebra is equivalent to a deformation whose n-infinitesimal is not a coboundary for some $n\geq 1$.
\end{Thm}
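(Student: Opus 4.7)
The plan is a standard deformation-theoretic argument: use the hypothesis by contradiction, and kill coboundary $n$-infinitesimals one at a time via successive equivalences, showing that if at every stage the first non-zero term beyond $\mathfrak{m}$ is a coboundary, then the deformation must be equivalent to the trivial one.

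First I would fix a non-trivial deformation $\mathfrak{m}_t = \mathfrak{m}+\sum_{i\geq 1} t^i m_i$ and let $n\geq 1$ be the smallest index with $m_n\neq 0$. By the preceding proposition, this $n$-infinitesimal $m_n$ is a $2$-cocycle in $C^*_{def}(\mathcal L,\alpha_L)$. If $m_n$ is not a coboundary we are done, so assume $m_n=\delta(\phi_n)=[\mathfrak{m},\phi_n]$ for some $\phi_n \in C^1_{def}(\mathcal L,\alpha_L)=Der_{\phi}(L,\alpha_L)$. Because $\phi_n$ is a $(\phi,\alpha_L)$-derivation it automatically satisfies $\phi_n\circ\alpha_L=\alpha_L\circ\phi_n$, so the formal automorphism $\Phi_t:=\mathrm{id}_L-t^n\phi_n$ is admissible in the sense of the equivalence definition.

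The next step is the computation of the equivalent deformation $\tilde{\mathfrak{m}}_t(x,y):=\Phi_t^{-1}\mathfrak{m}_t(\Phi_t x,\Phi_t y)$ modulo $t^{n+1}$. Expanding with $\Phi_t^{-1}=\mathrm{id}_L+t^n\phi_n+O(t^{n+1})$, the coefficients of $t^i$ for $i<n$ still vanish, and the coefficient of $t^n$ becomes
\begin{equation*}
\tilde m_n(x,y)=m_n(x,y)-\bigl(\mathfrak{m}(\phi_n x,y)+\mathfrak{m}(x,\phi_n y)-\phi_n\mathfrak{m}(x,y)\bigr)=m_n-\delta(\phi_n)=0,
\end{equation*}
using formulas \eqref{GLB}-\eqref{Defofcirc} specialised to $p=1,q=0$. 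Hence $\tilde{\mathfrak{m}}_t=\mathfrak{m}+t^{n'}\tilde m_{n'}+\cdots$ with $n'>n$, and the preceding proposition again says $\tilde m_{n'}$ is a $2$-cocycle.

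Now I would iterate. Suppose, for contradiction, that at every such stage the newly produced first non-zero infinitesimal is a coboundary. Then inductively one builds a sequence $\phi_{n_k}\in Der_{\phi}(L,\alpha_L)$ with $n_1<n_2<\cdots\to\infty$, and the (t-adically convergent) infinite product
\begin{equation*}
\Psi_t:=\cdots\circ(\mathrm{id}_L-t^{n_2}\phi_{n_2})\circ(\mathrm{id}_L-t^{n_1}\phi_{n_1})
\end{equation*}
is a formal automorphism of $L[[t]]$ commuting with $\alpha_L$ that realises an equivalence between $\mathfrak{m}_t$ and $\mathfrak{m}$. This contradicts the non-triviality assumption, so at some finite stage the first non-zero infinitesimal must fail to be a coboundary, proving the theorem.

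The main obstacle is the bookkeeping in the iteration: one must check that each successive equivalence preserves vanishing of all earlier coefficients (which follows because $\Phi_t=\mathrm{id}+O(t^{n_k})$), that the resulting infinite product converges $t$-adically and that the limit is a well-defined formal automorphism commuting with $\alpha_L$, and that passing to the limit really produces an equivalence with the trivial deformation. The cocycle property of each successive $\tilde m_{n'}$ is not automatic from being an infinitesimal of an equivalent deformation, but it follows at once from the proposition stating that every $n$-infinitesimal is a $2$-cocycle, applied to $\tilde{\mathfrak m}_t$.
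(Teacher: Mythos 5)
Your proposal is correct and follows essentially the same route as the paper: assume the $n$-infinitesimal is a coboundary $\delta(\phi_n)$, conjugate by the formal automorphism $\mathrm{id}_L\pm t^n\phi_n$ (which commutes with $\alpha_L$ since $\phi_n$ is a $(\phi,\alpha_L)$-derivation) to kill the $t^n$-coefficient, and iterate, concluding that non-triviality forces some stage to produce a non-coboundary infinitesimal. The only difference is that you spell out the $t$-adic convergence of the composed equivalences and the contradiction with triviality, which the paper leaves implicit in the phrase ``repeat the argument.''
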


\begin{proof}
Let $\mathfrak{m}_t$ be a deformation of hom-Lie-Rinehart algebra with n-infinitesimal $m_n$ for some $n\geq 1$. Assume that there exists a 2-cochain $\phi \in C^1_{def}(\mathcal{L},\alpha_L)$ with $\delta(\phi)=m_n$. Then set
$$
\Phi_t=id_{L}+\phi t^n ~~\mbox{and define}~~\bar{\mathfrak{m}_t}=\Phi_t \circ \mathfrak{m}_t \circ \Phi_t^{-1}.
$$
Then by computing the expression and comparing coefficients of $t^n$, we get
$$
\bar{m_n}-m_n=-[\mathfrak{m},m_n]=-\delta(\phi).
$$
So, $\bar{m_n}=0$.
We can repeat the argument to kill off any infinitesimal, which is a coboundary.
\end{proof}

\begin{Cor}
If $H^2_{def}(\mathcal{L},\alpha_L)=0$, then hom-Lie-Rinehart algebra is rigid.
\end{Cor}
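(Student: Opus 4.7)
The plan is to combine the two preceding results (Theorem on $n$-infinitesimals and the cocycle property of $n$-infinitesimals) into a standard inductive ``killing of obstructions'' argument in the $t$-adic topology. Concretely, let $\mathfrak{m}_t = \mathfrak{m} + \sum_{i\geq 1} t^i m_i$ be an arbitrary deformation. The goal is to construct a formal automorphism $\Phi_t$ of $L[[t]]$ such that the pulled-back deformation equals the trivial one $\mathfrak{m}$.

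First, I would argue by contradiction (equivalent to the preceding theorem). Suppose $\mathfrak{m}_t$ is not equivalent to the trivial deformation. Then by the previous theorem it is equivalent to a deformation $\mathfrak{m}_t'$ whose $n$-infinitesimal $m_n'$ is not a coboundary for some $n \geq 1$. By the Proposition stating that the $n$-infinitesimal is a $2$-cocycle, $m_n'$ represents a nontrivial class in $H^2_{def}(\mathcal{L},\alpha_L)$, contradicting the assumption $H^2_{def}(\mathcal{L},\alpha_L)=0$. Hence every deformation is equivalent to the trivial one, which is the definition of rigidity.

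Alternatively (and essentially equivalently), one can present the proof directly as an inductive construction. Starting from $\mathfrak{m}_t$, let $n$ be the smallest index with $m_n \neq 0$. Since $m_1 = \cdots = m_{n-1} = 0$, the defining equation $[[\mathfrak{m}_t,\mathfrak{m}_t]]=0$ forces $\delta(m_n) = [\mathfrak{m},m_n] = 0$, so $m_n \in Z^2_{def}$. The hypothesis $H^2_{def}(\mathcal{L},\alpha_L)=0$ yields $\phi_n \in C^1_{def}(\mathcal{L},\alpha_L)$ with $\delta(\phi_n) = m_n$. Setting $\Phi_t^{(n)} = \mathrm{id}_L + t^n \phi_n$ and conjugating $\mathfrak{m}_t$ by $\Phi_t^{(n)}$, the computation in the proof of the preceding theorem shows that the $n$-infinitesimal of the new deformation vanishes, while the lower-order terms $m_0 = \mathfrak{m}$ are unchanged.

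Iterating this for $n, n+1, n+2, \ldots$ produces a sequence of formal automorphisms $\Phi_t^{(n)}, \Phi_t^{(n+1)}, \ldots$, each of the form $\mathrm{id}_L + t^k(\cdots)$ with $k$ strictly increasing. The infinite composition $\Phi_t = \cdots \circ \Phi_t^{(n+1)} \circ \Phi_t^{(n)}$ is well defined in the $t$-adic topology on $L[[t]]$ (each coefficient of $t^k$ is affected by only finitely many of the factors), and by construction $\Phi_t \cdot \mathfrak{m}_t = \mathfrak{m}$. The only subtle point, and the one I would take care with, is verifying convergence and compatibility: one must check that each $\phi_k$ can be chosen to commute with $\alpha_L$ so that $\Phi_t$ lies in the appropriate automorphism group, and that the $t$-adic limit of the partial compositions defines a genuine formal automorphism equivalence in the sense of the definition. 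Both follow from the fact that $C^1_{def}(\mathcal{L},\alpha_L) = Der_\phi(L,\alpha_L)$ already incorporates the compatibility condition $D\circ \alpha_L = \alpha_L \circ D$.
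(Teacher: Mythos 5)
Your proof is correct and follows essentially the same route the paper intends: the corollary is an immediate consequence of the preceding theorem (every non-trivial deformation is equivalent to one whose $n$-infinitesimal is not a coboundary) together with the proposition that $n$-infinitesimals are $2$-cocycles, which is exactly your contradiction argument. Your second, constructive version merely spells out the paper's ``repeat the argument to kill off any infinitesimal'' step, with the $t$-adic convergence and $\alpha_L$-compatibility points correctly handled.
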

\subsection{An example of a rigid  Hom-Lie-Rinehart algebra}
Here we deduce that the space of $\phi$-derivations of an associative commutative algebra is a rigid Hom-Lie-Rinehart algebra.

 Let $A$ be an associative commutative $R$-algebra and $\phi:A\rightarrow A$ be an algebra automorphism, then the space of all $\phi$-derivations of $R$-algebra $A$, denoted by $Der_{\phi}(A)$ has a hom-Lie-Rinehart algebra structure.  Let us recall that  
  $$(\mathcal{D}er_{\phi}(A), Ad_{\phi}):=(A,Der_{\phi}(A),[-,-]_{\phi},\phi,Ad_{\phi},Ad_{\phi})$$ is a hom-Lie-Rinehart algebra over $(A,\phi)$.

Let $D\in \mathfrak{Der}^1_{\phi}(\mathcal{D}er_{\phi}(A), Ad_{\phi})$ with symbol $\sigma_D$. Let $D$ be a $2$-cocycle, i.e. $$\delta(D)=[m,D]=0,$$ 
where $m=[-,-]_{\phi}$, and therefore it follows that 
$\sigma_{[m,D]}=0$. Now from equation \eqref{symbofpr}, we have the following relation 
$$ \textstyle{0=\sigma_{[m, D]} = - \sigma_{m} \odot D - \sigma_{D} \odot m + \{\sigma_{m}, \sigma_{D}\}}.$$
Here, $\sigma_m=Ad_{\phi}$, and for any $x,y\in Der_{\phi}(A),$ and $a\in A$,  
\begin{align*}
\textstyle{\sigma_{m} \odot D(x,y)(a)}&=\textstyle{Ad_{\phi}(D(x,y))(\phi(a))},\\
\textstyle{\{\sigma_{m}, \sigma_{D}\}(x,y)(a)}&=\textstyle{Ad_{\phi}^2(x)\sigma_D(y)(a)-\sigma_D(Ad_{\phi}(y))Ad_{\phi}(x)(a)}\\
&\textstyle{-Ad_{\phi}^2(y)\sigma_D(x)(a)+\sigma_D(Ad_{\phi}(x))Ad_{\phi}(y)(a)},\\
\textstyle{(\sigma_{D} \odot m)(x,y)(a)}&=\textstyle{\sigma_D([x,y]_{\phi})(\phi(a))}
\end{align*}

Therefore, we get the following equation
\begin{align}\label{part1}
{\phi}(D(x,y)(a))&=-\sigma_D([x,y]_{\phi})(\phi(a))+Ad_{\phi}^2(x)\sigma_D(y)(a)\\\nonumber
&-\sigma_D(Ad_{\phi}(y))Ad_{\phi}(x)(a)-Ad_{\phi}^2(y)\sigma_D(x)(a)\\\nonumber
&+\sigma_D(Ad_{\phi}(x))Ad_{\phi}(y)(a).
\end{align}

Let us consider, a map $Ad_{\phi}^{-1}\circ\sigma_D:Der_{\phi}(A)\rightarrow Der_{\phi}(A)$, then clearly it is an $A$-linear map and it commutes with the map $Ad_{\phi}:Der_{\phi}(A)\rightarrow Der_{\phi}(A)$. Therefore, $Ad_{\phi}^{-1}\circ\sigma_D$ is a zero-degree $(\phi,Ad_{\phi})$-derivation of the pair ${D}er_{\phi}(A)$. 
\begin{align}\label{part2}
\nonumber
\delta(Ad_{\phi}^{-1}\circ\sigma_D)(x,y)(a)&=[m,Ad_{\phi}^{-1}\circ\sigma_D]
(x,y)(a)\\
&=[Ad_{\phi}^{-1}\circ\sigma_D(x),y]_{\phi}(a)-[Ad_{\phi}^{-1}\circ\sigma_D(y),x]_{\phi}(a)\\\nonumber
&-Ad_{\phi}^{-1}\circ\sigma_D([x,y]_{\phi})(a).
\end{align}

The first two terms on the right-hand side of the previous expression can be written as follows:
\begin{align}\label{part3}
\nonumber
&\Big([Ad_{\phi}^{-1}\circ\sigma_D(x),y]_{\phi}-[Ad_{\phi}^{-1}\circ\sigma_D(y),x]_{\phi}\Big)(a)\\\nonumber
=&\sigma_D(x)(y(\phi^{-1}(a))-\phi(y(\phi^{-2}(\sigma_D(x)(a))))\\\nonumber
-&\sigma_D(y)(x(\phi^{-1}(a))
+\phi(x(\phi^{-2}(\sigma_D(y)(a))))\\
=&\phi^{-1}\Big(\sigma_D(Ad_{\phi}(x))Ad_{\phi}(y)(a)-Ad_{\phi}^2(y)\sigma_D(x)(a)\\\nonumber
-&\sigma_D(Ad_{\phi}(y))Ad_{\phi}(x)(a)
+Ad_{\phi}^2(x)\sigma_D(y)(a)\Big).
\end{align}

By using equations \eqref{part1}-\eqref{part3}, we deduce that 
$$D(x,y)(a)=[m,Ad_{\phi}^{-1}\circ\sigma_D]
(x,y)(a)=\delta(Ad_{\phi}^{-1}\circ\sigma_D)(x,y)(a).$$

Therefore, any $2$-cocyle is a $2$-coboundary, i.e. $H^2_{def}(\mathcal{D}er_{\phi}(A), Ad_{\phi})=0$. Consequently, we have the following proposition 

\begin{Prop}
The hom-Lie-Rinehart algebra $(\mathcal{D}er_{\phi}(A), Ad_{\phi})$ is rigid.
\end{Prop}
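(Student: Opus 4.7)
The plan is to invoke the preceding corollary, which reduces rigidity to the vanishing of $H^2_{def}(\mathcal{D}er_{\phi}(A), Ad_{\phi})$. So the task becomes showing that every $2$-cocycle in $C^2_{def}$, i.e.\ every $D\in\mathfrak{Der}^1_{\phi}(\mathcal{D}er_{\phi}(A),Ad_{\phi})$ with $\delta(D)=[m,D]=0$ (where $m=[-,-]_{\phi}$), is a $2$-coboundary.

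First, I would extract information from the cocycle condition by looking at it through the symbol. Using the formula \eqref{symbofpr} for the symbol of a bracket, $[m,D]=0$ forces $\sigma_{[m,D]}=-\sigma_m\odot D-\sigma_D\odot m+\{\sigma_m,\sigma_D\}=0$. Substituting $\sigma_m=Ad_\phi$ and unpacking the three pieces using their defining formulas yields an explicit identity expressing $\phi(D(x,y)(a))$ as a combination of $\sigma_D$-terms on $Ad_\phi(x)$, $Ad_\phi(y)$ and $[x,y]_\phi$ (this is equation \eqref{part1} in the exposition above the proposition).

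Next, I would construct the candidate primitive. The natural guess is the $0$-degree element $\tilde D:=Ad_\phi^{-1}\circ\sigma_D\colon Der_\phi(A)\to Der_\phi(A)$; here $Ad_\phi$ being invertible (since $\phi$ is an automorphism) is used crucially. I would verify that $\tilde D$ is $A$-linear and commutes with $Ad_\phi$, so that $\tilde D\in Der_\phi(Der_\phi(A),Ad_\phi)=C^1_{def}(\mathcal{D}er_\phi(A),Ad_\phi)$. Then I would compute $\delta(\tilde D)(x,y)(a)=[m,\tilde D](x,y)(a)$, write it out as
\[
[\tilde D(x),y]_\phi(a)-[\tilde D(y),x]_\phi(a)-\tilde D([x,y]_\phi)(a),
\]
and, using $Ad_\phi^{-1}=\phi^{-1}\circ(\,\cdot\,)\circ\phi$ on $Der_\phi(A)$, apply $\phi^{-1}$ to turn this into exactly the right-hand side of \eqref{part1}. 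Comparing with the cocycle identity then gives $\delta(\tilde D)=D$.

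Once this is established, every $2$-cocycle is a coboundary, so $H^2_{def}(\mathcal{D}er_\phi(A),Ad_\phi)=0$, and rigidity follows from the corollary. The main obstacle I expect is the careful bookkeeping in the last step: one has to match the four symbol-terms produced by $[\tilde D(x),y]_\phi-[\tilde D(y),x]_\phi-\tilde D([x,y]_\phi)$ with the four corresponding terms in \eqref{part1} after the common factor $\phi^{-1}$ (resp.\ $\phi$) is absorbed, and to keep track of the signs and the positions of $Ad_\phi$ versus $Ad_\phi^2$. Verifying that $\tilde D$ genuinely lies in $Der_\phi(Der_\phi(A),Ad_\phi)$ — i.e.\ checking $A$-linearity and commutation with $Ad_\phi$ — is a preliminary but essential sanity check, relying on the defining properties of the symbol $\sigma_D$ of a $(\phi,Ad_\phi)$-multiderivation.
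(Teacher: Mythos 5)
Your proposal is correct and follows essentially the same route as the paper: extract the identity \eqref{part1} from the symbol part of the cocycle condition via \eqref{symbofpr} with $\sigma_m=Ad_\phi$, take $Ad_\phi^{-1}\circ\sigma_D$ as the degree-zero primitive, check it lies in $Der_\phi(Der_\phi(A),Ad_\phi)$, and verify $\delta(Ad_\phi^{-1}\circ\sigma_D)=D$, so $H^2_{def}(\mathcal{D}er_\phi(A),Ad_\phi)=0$ and rigidity follows from the corollary. No substantive difference from the paper's argument.
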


\subsection{Obstructions to the extension of deformations}
Let $(\mathcal{L},\alpha_L)$ be a hom-Lie-Rinehart algebra and the hom-Lie-Rinehart structure on the pair $(L,\alpha_L)$ corresponds to $\mathfrak{m}\in C^2_{def}(\mathcal{L},\alpha_L)$. Now we consider the problem of extending a deformation of $\mathfrak{m}$ of order $N$ to a deformation of $\mathfrak{m}$ of order $(N+1)$. Let $m_t$ be a deformation of $\mathfrak{m}$ of order $N$. Then $$\mathfrak{m}_t=\sum^N_{i=0}m_i t^i$$
where $m_i\in C^2_{def}(\mathcal{L},\alpha_L)$ for each $1\leq i\leq N$ such that $[[\mathfrak{m}_t,\mathfrak{m}_t]]=0$. If there exists a 2-cochain $m_{N+1}\in C^2_{def}(\mathcal{L},\alpha_L)$ such that $\tilde{\mathfrak{m}_t}=\mathfrak{m}_t+m_{N+1} t^{N+1}$ is a deformation of $\mathfrak{m}$ of order $N+1$. Then we say that $\mathfrak{m}_t$ extends to a deformation of order $(N+1)$.

\begin{Def}
A deformation of $\mathfrak{m}$ of order $N$ is given by $\textstyle{\mathfrak{m}_t=\sum^N_{i=0}m_i t^i}$ modulo $t^{N+1}$, such that $[[\mathfrak{m}_t,\mathfrak{m}_t]]=0$.
If there exists a 2-cochain $m_{N+1}\in C^2_{def}(\mathcal{L},\alpha_L)$, such that $$\tilde{\mathfrak{m}_t}=\mathfrak{m}_t+m_{N+1} t^{N+1}$$ is a deformation of order $N+1$. Then we say that $\mathfrak{m}_t$ extends to a deformation of order $(N+1)$.
\end{Def}

\begin{Def}
Let $\mathfrak{m}_t$ be a deformation of $\mathfrak{m}$ of order $N$. Consider the cochain \\
$~~~~~~~~~\Theta(\mathcal{L},\alpha_L) \in C^3_{def}(\mathcal{L},\alpha_L)$, where
\begin{equation*}
\begin{split}
\Theta(\mathcal{L},\alpha_L)(a,b,c)
=\sum\limits_{i+j=N+1;~i,j>0} \Big( m_i(\alpha_L(a),m_j(b,c))+&m_i(\alpha_L(b),m_j(c,a))\\
&+m_i(\alpha_L(c),m_j(a,b))\Big).
\end{split}
\end{equation*}
for $a,b,c \in L$. The 3-cochain $\Theta(\mathcal{L},\alpha_L)$ is called the obstruction cochain for extending the deformation of $\mathfrak{m}$ of order $N$ to a deformation of order $N+1$. We can write $\Theta(\mathcal{L},\alpha_L)$ as follows
\begin{equation}\label{Obst}
\Theta(\mathcal{L},\alpha_L)=-1/2\sum_{i+j=N+1;~i,j>0}[m_i,m_j]
\end{equation}
By equation \eqref{Obst}, and using graded Jacobi identity it follows that $\Theta(\mathcal{L},\alpha_L)$ is a 3-cocycle.
\end{Def}

\begin{Thm}\label{hom-Obst}
Let $\mathfrak{m}_t$ be a deformation of $\mathfrak{m}$ of order $N$. Then $\mathfrak{m}_t$ extends to a deformation of order $N+1$ if and only if the cohomology class of $\Theta(\mathcal{L},\alpha_L)$ vanishes.

\begin{proof}
Suppose that a deformation $\mathfrak{m}_t$ of order $N$ extends to a deformation of order $N+1$. Then 
$$
\sum_{i+j=N+1;~ i,j\geq 0} m_i(\alpha_L(a),m_j(b,c))+m_i(\alpha_L(b),m_j(c,a))+m_i(\alpha_L(c),m_j(a,b))=0.
$$
As a result, we get $\Theta(\mathcal{L},\alpha_L)=\delta(m_{N+1})$. So, cohomology class of $\Theta(\mathcal{L},\alpha_L)$ vanishes.

Conversely, let $\Theta(\mathcal{L},\alpha_L)$ be a coboundary. Suppose that
$$
\Theta(\mathcal{L},\alpha_L)=\delta(m_{N+1}) 
$$
for some 2-cochain $m_{N+1}$. Define a map $\tilde{\mathfrak{m}_t}:L[[t]]\times L[[t]]\rightarrow L[[t]]$ as follows
$$
\tilde{\mathfrak{m}_t}=\mathfrak{m}_t+m_{N+1}t^{N+1}.
$$
Then for any $x,y,z\in L $, the map $\tilde{\mathfrak{m}_t}$ satisfies the following identity
$$
\sum_{i+j=N+1;~ i,j\geq 0} m_i(\alpha_L(x),m_j(y,z))+m_i(\alpha_L(y),m_j(z,x))+m_i(\alpha_L(z),m_j(x,y))=0.
$$
 This, in turn, implies that $\tilde{\mathfrak{m}_t}$ is a deformation of $\mathfrak{m}$ extending $\mathfrak{m}_t$.
\end{proof}

\begin{Cor}
If $H^3_{def}(\mathcal{L},\alpha_L)=0$, then every 2-cocycle in $C^2_{def}(\mathcal{L},\alpha_L)$ is an infinitesimal of some deformation of $\mathfrak{m}$.
\end{Cor}
\end{Thm}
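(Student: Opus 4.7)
The plan is to construct, by induction on the order, a formal deformation $\mathfrak{m}_t$ whose linear coefficient is the prescribed $2$-cocycle, and to use the vanishing of $H^3_{def}(\mathcal{L},\alpha_L)$ to propagate the inductive step via Theorem \ref{hom-Obst}.

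First I would handle the base case. Given a $2$-cocycle $m_1 \in C^2_{def}(\mathcal{L},\alpha_L)$, set $\mathfrak{m}_t^{(1)} := \mathfrak{m} + m_1 t$. The condition $[[\mathfrak{m}_t^{(1)}, \mathfrak{m}_t^{(1)}]] = 0$ modulo $t^2$ reduces to $[\mathfrak{m},\mathfrak{m}] + 2t\,[\mathfrak{m},m_1] \equiv 0 \pmod{t^2}$, and this holds because $[\mathfrak{m},\mathfrak{m}]=0$ and $\delta(m_1)=[\mathfrak{m},m_1]=0$ by hypothesis. Hence $\mathfrak{m}_t^{(1)}$ is a deformation of order $1$.

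Next I would run the induction. Assume a deformation of order $N$ has been constructed, namely $\mathfrak{m}_t^{(N)} = \mathfrak{m} + \sum_{i=1}^{N} m_i t^i$ with $[[\mathfrak{m}_t^{(N)},\mathfrak{m}_t^{(N)}]] \equiv 0 \pmod{t^{N+1}}$, and with linear term $m_1$. The obstruction to extending to order $N+1$ is the $3$-cochain
\begin{equation*}
\Theta(\mathcal{L},\alpha_L) \;=\; -\tfrac{1}{2}\!\!\sum_{\substack{i+j=N+1\\ i,j>0}}\!\![m_i,m_j],
\end{equation*}
which is a $3$-cocycle by the computation preceding Theorem \ref{hom-Obst}. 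Since $H^3_{def}(\mathcal{L},\alpha_L)=0$, this cocycle is a coboundary, say $\Theta(\mathcal{L},\alpha_L) = \delta(m_{N+1})$ for some $m_{N+1} \in C^2_{def}(\mathcal{L},\alpha_L)$. By the converse direction of Theorem \ref{hom-Obst}, the extension $\mathfrak{m}_t^{(N+1)} := \mathfrak{m}_t^{(N)} + m_{N+1} t^{N+1}$ is a deformation of order $N+1$, and crucially the coefficient of $t$ is untouched.

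Finally, assembling the $m_i$ produced at each stage gives a formal power series $\mathfrak{m}_t = \mathfrak{m} + \sum_{i\geq 1} m_i t^i$ satisfying $[[\mathfrak{m}_t,\mathfrak{m}_t]]=0$ in $\mathfrak{Der}^*_{\phi_t}(L[[t]],\alpha_{L_t})$, i.e.\ a genuine formal deformation of $\mathfrak{m}$ whose infinitesimal is the given $m_1$. I do not expect any real obstacle here: the only substantive ingredients are (i) the $2$-cocycle condition to start the induction, (ii) the identification of the obstruction class in $H^3_{def}$ and its vanishing, and (iii) Theorem \ref{hom-Obst} to carry out the inductive step. The mildest point to be careful about is checking that adding $m_{N+1}t^{N+1}$ does not alter the lower-order terms — which is immediate since only the coefficient of $t^{N+1}$ in $[[\mathfrak{m}_t,\mathfrak{m}_t]]$ is affected.
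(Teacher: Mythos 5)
Your proposal does not actually prove Theorem \ref{hom-Obst}: it proves the Corollary beneath it, and it does so by invoking ``the converse direction of Theorem \ref{hom-Obst}'' as a known fact. Since the statement under review is precisely that theorem (the equivalence between extendability of $\mathfrak{m}_t$ to order $N+1$ and the vanishing of the class of $\Theta(\mathcal{L},\alpha_L)$), your argument is circular exactly where the content lies: neither direction of the equivalence is established. What is needed --- and what the paper's proof carries out --- is the coefficient comparison. Writing $\tilde{\mathfrak{m}}_t=\mathfrak{m}_t+m_{N+1}t^{N+1}$, the coefficient of $t^{N+1}$ in $[[\tilde{\mathfrak{m}}_t,\tilde{\mathfrak{m}}_t]]$ (equivalently, the order-$(N+1)$ part of the hom-Jacobi identity, as in equation \eqref{DefEqu}) splits into the terms with $i=0$ or $j=0$, which assemble to $2[\mathfrak{m},m_{N+1}]=2\,\delta(m_{N+1})$, and the terms with $i,j>0$, which by equation \eqref{Obst} assemble to $-2\,\Theta(\mathcal{L},\alpha_L)$. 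Hence the order-$(N+1)$ deformation equation holds if and only if $\delta(m_{N+1})=\Theta(\mathcal{L},\alpha_L)$, the lower-order coefficients being untouched by the added term. This single identity yields both implications: if an extension exists, its coefficient $m_{N+1}$ exhibits $\Theta(\mathcal{L},\alpha_L)$ as a coboundary, so its class vanishes; conversely, if the class vanishes, any primitive $m_{N+1}$ of $\Theta(\mathcal{L},\alpha_L)$ produces an extension. One should also record (as the paper does before the theorem, via the graded Jacobi identity) that $\Theta(\mathcal{L},\alpha_L)$ is a $3$-cocycle, so that speaking of its cohomology class is legitimate.

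Read solely as a proof of the Corollary, your induction is correct and is the standard argument the paper leaves implicit: $\mathfrak{m}+m_1t$ is a deformation of order $1$ precisely because $\delta(m_1)=[\mathfrak{m},m_1]=0$; at each stage $H^3_{def}(\mathcal{L},\alpha_L)=0$ lets you choose a primitive of the obstruction cocycle, and adding $m_{N+1}t^{N+1}$ does not disturb lower-order coefficients, so the limiting series satisfies $[[\mathfrak{m}_t,\mathfrak{m}_t]]=0$ coefficientwise with infinitesimal $m_1$. But that is downstream of the theorem, not a proof of it; you still owe the two-way coefficient argument above.
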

\section{Some particular cases}
In this section, we discuss some particular cases of hom-Lie-Rinehart algebras and the associated deformation complex governed by suitable differential graded Lie algebras. 

\subsection{Lie-Rinehart algebras}

Let $(\mathcal{L},\alpha_L)$ be a hom-Lie-Rinehart algebra over $(A,\phi)$. If $\alpha_L=Id_L$, and $\phi=id_A$, then the hom-Lie-Rinehart algebra $(\mathcal{L},\alpha_L)$ is a Lie-Rinehart algebra $L$ over $A$. 

Next, let $L$ be an $A$-module and $\beta:L\rightarrow L $ be a $\phi$-function linear map, then in the case when $\beta=id_{L},$ and $\phi=id_A$, the definition of degree $n$ $(n\geq 0)$ $(\phi,\beta)$-multiderivation of the pair $(L,\beta)$ gives the notion of an $A$-module multiderivation \cite{LUCA15} of degree $n+1$. However, here we consider the following definition of $A$-module multiderivations (with a change in the convention of degree from \cite{LUCA15}).
\begin{Def}
An $A$-module multiderivation (of degree $n\geq 0$) of $L$ is a skew-symmetric $R$-multilinear map:
$$
D:\wedge^{n+1}L\rightarrow L.
$$
for which we have an $A$-multilinear map $\sigma_D \in Hom_A(\wedge^n_AL,Der(A)) $ such that
\begin{equation*}
\begin{split}
&D(x_0,\cdots,f.x_i,\cdots,x_n)\\
&= f.D(x_0,\cdots,x_i,\cdots,x_n)+(-1)^{n-i}\sigma_D(x_0,\cdots,\hat{x_i},\cdots,x_{n-1})(f).x_i.
\end{split}
\end{equation*}

\end{Def}

Denote the space of all n-degree multiderivations as $Der^n(L)$, $n\geq 0$. Set $Der^{-1}(L)=L.$
This gives a graded $A$-module $Der^*(L)=\oplus_{i\in \mathbb{Z}}Der^i(L)$, where $Der^i(L)=0~ \mbox{for}~ i\leq-2$.
Note that for any $n\geq 0$, we have $$Der^n(L)=\mathfrak{Der}^n_{id_A}(L,id_{L}).$$
\begin{Rem}
Let $\mathcal{E}$ be a faithful $A$-module. A Koszul connection on $\mathcal{E}$ is an $A$-linear mapping $\nabla:~Der(A)~\rightarrow~Hom_R(\mathcal{E},\mathcal{E})$, sending $X\mapsto \nabla_X$ such that
$$
\nabla_X(a.m)=X(a).m+a.\nabla_X(m)$$
for $m\in \mathcal{E},~X\in Der(A),$ and $a\in A$. In general, an $A$-module may not admit a Koszul connection. But for any Projective $A$-module, there exists a Koszul connection.
\end{Rem}
Following lemma generalises its geometric counterpart for Lie algebroid case, given in \cite{CM08}:
\begin{Lem}
Space of $A$-module multiderivations of degree n on $L$, i.e. $Der^n(L)$ fits into an exact sequence of $A$-modules:
$$
0\rightarrow Hom_A(\wedge^{n+1}_AL,L) \rightarrow Der^n(L) \rightarrow Hom_A(\wedge^{n}_AL,Der(A)) \rightarrow 0,
$$
for $n\geq0$.
\end{Lem}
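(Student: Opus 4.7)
The plan is to identify the two arrows in the sequence explicitly and then verify exactness at each position. The rightmost arrow is the symbol map $\Sigma\colon Der^n(L) \to Hom_A(\wedge^n_A L, Der(A))$ sending $D$ to $\sigma_D$. The leftmost arrow $\iota$ sends an $A$-multilinear alternating map $F\colon \wedge^{n+1}_A L \to L$ to itself, viewed as a multiderivation with vanishing symbol: when $\sigma_D = 0$, the defining Leibniz identity collapses to $A$-linearity in the last slot, which is automatic for $A$-multilinear maps. Injectivity of $\iota$ is immediate since forgetting $A$-linearity down to $R$-linearity loses no information.

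For exactness at $Der^n(L)$, if $D \in \ker\Sigma$ then the derivation rule specializes to $D(x_0,\ldots,f x_n) = f D(x_0,\ldots,x_n)$, i.e., $A$-linearity in the last slot; combined with the built-in skew-symmetry, this forces $A$-linearity in every slot, so $D$ lies in the image of $\iota$. The reverse inclusion is tautological from the construction of $\iota$.

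The main content of the lemma, and the place where the preceding remark enters, is surjectivity of $\Sigma$: given an arbitrary $\sigma \in Hom_A(\wedge^n_A L, Der(A))$, one must produce a multiderivation with that symbol. Fix a Koszul connection $\nabla\colon Der(A) \to Hom_R(L,L)$ (which exists, for example, when $L$ is projective) and define
\[
D(x_0,\ldots,x_n) := \sum_{i=0}^{n} (-1)^{n-i}\, \nabla_{\sigma(x_0,\ldots,\widehat{x_i},\ldots,x_n)}(x_i).
\]
A direct bookkeeping argument, exploiting the $A$-multilinearity of $\sigma$ to pull scalars out of each of its arguments and the Leibniz rule $\nabla_X(f y) = X(f) y + f \nabla_X(y)$, shows that $D$ satisfies the derivation identity with $\sigma_D = \sigma$, and that $D$ is skew-symmetric: under an adjacent transposition $x_j \leftrightarrow x_{j+1}$ the summands with $i \notin \{j,j+1\}$ pick up a sign from the skewness of $\sigma$, while the two remaining summands interchange with a compensating sign.

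The principal obstacle is precisely this surjectivity step: a canonical right inverse of $\Sigma$ cannot be written down without some additional structure on $L$, and the Koszul connection is the cleanest way to supply it. This is the algebraic shadow of Crainic-Moerdijk's use of linear connections on vector bundles in \cite{CM08}; once the connection is fixed, the remainder of the verification is routine.
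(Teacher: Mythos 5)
Your proposal is correct and follows essentially the same route as the paper: the paper also fixes a Koszul connection and forms the correction $F_D(x_0,\ldots,x_n)=D(x_0,\ldots,x_n)+(-1)^n\sum_{i=0}^n(-1)^{i+1}\nabla_{\sigma_D(x_0,\ldots,\widehat{x_i},\ldots,x_n)}(x_i)$, obtaining the isomorphism $Der^n(L)\cong Hom_A(\wedge^{n+1}_A L,L)\oplus Hom_A(\wedge^{n}_A L,Der(A))$ via $D\mapsto(F_D,\sigma_D)$, whose second component is split by exactly the connection term you use as a section of the symbol map. The only difference is presentational: the paper packages the argument as this splitting isomorphism, while you verify exactness at each stage and construct the right inverse of $\Sigma$ directly.
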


\begin{proof}
Define, $F_D \in Hom_R(\otimes^{n+1}L,L)$ by
$$
F_D(x_0,\cdots,x_n)=D(x_0,\cdots,x_n)+(-1)^n \sum_{i=0}^n(-1)^{i+1}\nabla_{\sigma_D(x_0,\cdots,\hat{x_i},\cdots,x_n)}(x_i).
$$
It follows that $F_D$ is skew-symmetric and $A$-multilinear. Also, one can check that a connection $\nabla$ on $L$ determines an isomorphism of $A$-modules
$$
Der^n(L) \cong Hom_A(\wedge^{n+1}_A L,L) \oplus Hom_A(\wedge^{n}_AL,Der(A))
$$
assigning $D~\rightarrow~(F_D,\sigma_D).$

\end{proof}

\subsubsection{Graded Lie algebra structure on $Der^{*}(L)$.}
From Theorem \ref{GLA}, it is immediate that there is a graded Lie algebra structure on the graded $A$-module $Der^{*}(L)$. If $D_1 \in Der^{p}(L)$, and  $D_2 \in Der^{q}(L)$, then the graded Lie bracket is defined as follows:
$$ [D_1,D_2]=(-1)^{pq}D_1\circ D_2~-~D_2\circ D_1 $$
where,
\begin{equation*}
\begin{split}
&D_1\circ D_2(x_0,\cdots,x_{p+q})\\
=&\sum_{\tau \in S(q+1,p)}sgn(\tau)D_1(D_2(x_{\tau(0)},\cdots,x_{\tau(q+1)}),\cdots,x_{\tau(p+q)}).
\end{split}
\end{equation*}
It follows that $[D_1,D_2]\in Der^{p+q}(L)$ and
$$
\sigma_{[D_1,D_2]}=(-1)^{pq} \sigma_{D_1}\circ D_2-\sigma_{D_2}\circ D_1+[\sigma_{D_1},\sigma_{D_2}].
$$
where,
\begin{equation*}
\begin{split}
&[\sigma_{D_1},\sigma_{D_2}](x_1,\cdots,x_{p+q})\\
=&\sum_{\tau \in S(p,q)}sgn(\tau)[\sigma_{D_1}(x_{\tau(1)},\cdots,x_{\tau(p)}),\sigma_{D_2}(x_{\tau(p+1)},\cdots,x_{\tau(p+q)})]
\end{split}
\end{equation*}

Next result is a particular case of Proposition \ref{LR-1der}, which describes Lie-Rinehart algebra structures on $L$ in term of the above-mentioned graded Lie algebra.

\begin{Prop}
If $L$ is an $A$-module, then there exists a one-one correspondence between Lie-Rinehart algebra structures on $L$ and elements $m\in Der^1(L)$ satisfying $[m,m]=0$.
\end{Prop}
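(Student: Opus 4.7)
The plan is to obtain this result as a direct specialization of Proposition \ref{LR-1der} by setting $\phi = \mathrm{id}_A$ and $\alpha_L = \mathrm{id}_L$. Under this specialization, a $(\phi,\alpha_L)$-multiderivation is exactly an $A$-module multiderivation as defined at the start of this subsection, and indeed $\mathfrak{Der}^1_{\mathrm{id}_A}(L,\mathrm{id}_L) = \mathrm{Der}^1(L)$. The graded Lie bracket of Theorem \ref{GLA} restricts to the graded Lie bracket on $\mathrm{Der}^*(L)$ displayed just above the proposition, so all the algebraic machinery is already in place and what remains is a faithful translation of the two implications.

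For the forward direction, given a Lie-Rinehart algebra $(L,[-,-],\rho)$ over $A$, I would define $m:L\wedge L \to L$ by $m(x,y):=[x,y]$. The Leibniz identity $[x,ay]=a[x,y]+\rho(x)(a)y$ from Definition \ref{Lie-Rin} is precisely the condition that $m \in \mathrm{Der}^1(L)$ with symbol $\sigma_m = \rho$; the hypothesis that $\rho$ is $A$-linear matches the $A$-multilinearity required of the symbol. The Jacobi identity for the Lie bracket then amounts to $(m\circ m)(x,y,z)=0$, and since $[m,m]=-2\,m\circ m$ in degree $(1,1)$, we obtain $[m,m]=0$.

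Conversely, starting with $m\in\mathrm{Der}^1(L)$ satisfying $[m,m]=0$, I set $[x,y]:=m(x,y)$ and $\rho:=\sigma_m$. Skew-symmetry of $[-,-]$ comes from $m\in\mathrm{Hom}_R(\wedge^2 L,L)$, the Leibniz identity is built into the definition of $\mathrm{Der}^1(L)$, and $A$-linearity of $\rho$ is built into $\sigma_m\in\mathrm{Hom}_A(L,\mathrm{Der}(A))$. The identity $[m,m]=0$ read off on $L\wedge L\wedge L$ gives the Jacobi identity for $[-,-]$, while the symbol formula \eqref{symbofpr} specialized to $p=q=1$ (with all $\phi$'s and $\beta$'s trivial) yields
\[
0 = \sigma_{[m,m]}(x,y) = -2\bigl(\sigma_m\circ m\bigr)(x,y) + [\sigma_m,\sigma_m](x,y),
\]
which unwinds to $\rho([x,y]) = [\rho(x),\rho(y)]$ in $\mathrm{Der}(A)$. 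This is the one verification that requires care.

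The main obstacle, therefore, is simply bookkeeping the symbol formula in this degenerate setting — checking that the shuffle sums collapse correctly and that the factor of $-2$ matches, so that the vanishing of $\sigma_{[m,m]}$ is exactly the statement that $\rho$ is a Lie algebra homomorphism. Because the general identity \eqref{symbofpr} has already been established and since every other axiom of a Lie-Rinehart algebra maps tautologically onto a defining property of $\mathrm{Der}^1(L)$, the proof reduces to invoking Proposition \ref{LR-1der} and noting that in the specialization $\phi=\mathrm{id}_A,\ \alpha_L=\mathrm{id}_L$ the correspondence constructed there is the desired one.
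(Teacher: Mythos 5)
Your proposal is correct and follows the same route as the paper, which states this result precisely as the specialization of Proposition \ref{LR-1der} to $\phi=\mathrm{id}_A$, $\alpha_L=\mathrm{id}_L$ together with the identification $Der^n(L)=\mathfrak{Der}^n_{\mathrm{id}_A}(L,\mathrm{id}_L)$, and offers no further argument. Your additional bookkeeping (the Leibniz rule matching the symbol condition, and $\rho([x,y])=[\rho(x),\rho(y)]$ extracted from the vanishing of $\sigma_{[m,m]}$ via \eqref{symbofpr}) simply makes explicit what the paper leaves implicit in the converse direction of Proposition \ref{LR-1der}.
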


\subsubsection{Deformation complex for Lie-Rinehart algebra}
Let $L$ be a Lie-Rinehart algebra over $A$, then it corresponds to an element $m\in Der^1(L)$ satisfying $[m,m]=0$. We define $$C^n_{def}(L):=C^n_{def}(\mathcal{L},id_L)$$
for $n\geq 1,$ and $C^0_{def}(L)=Der^{-1}(L)=L$. The coboundary $\delta:C^n_{def}(L) \rightarrow C^{n+1}_{def}(L)$ is given by $\delta(D)=[m,D]$. Thus, $(C^*_{def}(L),\delta)$ is a differential graded Lie algebra and it forms a cochain complex. We denote the cohomology of this cochain complex by $H^*_{def}(L)$.

\begin{Exm}
In particular, for Lie algebra $\mathfrak{g}$ the deformation complex $C^*_{def}(\mathfrak{g})$ is the usual  Chevalley-Eilenberg complex $C^*(\mathfrak{g},\mathfrak{g})$ with coefficients in the adjoint representation.
\end{Exm}

\begin{Exm}
For any Lie algebroid, the above deformation complex will be the deformation complex, defined in \cite{CM08}.
\end{Exm}

\begin{Exm}
For $L=Der(A)$, the commutator bracket $[-,-]_C$ gives a Lie algebra bracket and by considering the anchor map $\rho=id$, $(L,[-,-],\rho)$ is a Lie-Rinehart algebra. Consider $Der(A)$ to be a projective $A$-module. then
$$
Z^k(C^*_{def}(L)) = Hom_A(\wedge^{k-1}L,L).
$$
$\delta(D)=0$. Now, as we know
$$
\sigma_{\delta(D)}=\delta(\sigma_D)+(-1)^{k-1}\rho~o~D
$$
i.e., $D=(-1)^k\delta(\sigma_D)$.
Therefore,
$$
H^*_{def}(Der(A))=0.
$$
\end{Exm}

\begin{Def}
 A deformation  of a Lie-Rinehart algebra structure on $L$, which is given via $m\in Der^1(L)$, is defined as a $R[[t]]$-bilinear map $m_t:L[[t]] \otimes L[[t]]\rightarrow L[[t]]$, given by 
$$m_t[x,y]=\sum_{i\geq0} t^i m_i[x,y], ~\mbox{where} \quad m_0=m~ \mbox{and}~ m_i\in Der^1(L)~ \mbox{for}~ i\geq 0$$
satisfying $[[m_t,m_t]]$=0. Here, $[[-,-]]$ is the graded Lie algebra bracket on ${Der}^*(L[[t]])$.
\end{Def}
\begin{Rem}
If $D \in ker(\delta_1)$, then $\delta(D)=0~\mbox{for}~D\in Der(L)$. i.e.,
$$
m(D(x),y)+m(x,D(y))= D(m(x,y)).
$$
if $D\in Im(\delta_0)$, then D(y)=m(x,y).
So, $H^1_{def}$ gives set of all outer derivation on $L$. 
\end{Rem}
\begin{Rem}
Let $m_t$ be a deformation of $m$. Then we have:
\begin{equation}
m_t(a,m_t(b,c))+m_t(b,m_t(c,a))+m_t(c,m_t(a,b))=0.
\end{equation}
From equation (1), $\delta(m_1)=0$, i.e. $m_1$ is a 2-cocycle.
\end{Rem}

By Theorem \ref{Eq1}, it follows that $H^2_{def}(L)$ characterizes the non-trivial infinitesimal deformations of the hom-Lie-Rinehart algebra $L$. Thus, if $H^2_{def}(L)=0$, then Lie-Rinehart algebra is rigid. Moreover, Theorem \ref{hom-Obst} implies that obstructions to extend a deformation of order $n$ of Lie-Rinehart algebra $L$ to a deformation of order $ n+1$ are contained in the $3$-rd cohomology group $H^3_{def}(L)$. Therefore, $H^*_{def}(L)$ is deformation cohomology for the Lie-Rinehart algebra $L$, obtained as a particular case of hom-Lie-Rinehart algebra.

\begin{Rem}
In \cite{CARA}, the authors have discussed deformations of Lie-Rinehart algebras as an application of the deformation theory of Courant algebroids. In particular, they identified the deformation complex of a Lie-Rinehart algebra in terms of a specific Rothstein algebra.
\end{Rem}

\subsection{Hom-Lie algebras:}
Let $(L,[-,-]_L,\alpha_L)$ be a hom-Lie algebra (over $R$), then it is a hom-Lie module over itself by adjoint action. Recall that $(L,[-,-]_L,\alpha_L)$ is also a hom-Lie-Rinehart algebra $(\mathcal{L},\alpha_L)$ over $(R,id_R)$. Then a $(id_R,\alpha_L)$-multiderivation $\varphi$ of degree $n$ is simply a $n+1$-linear alternating map 
$$\varphi:\wedge^{n+1}L\rightarrow L,$$
satisfying $\alpha_L\circ \varphi=\varphi\circ \alpha_L^{\otimes(n+1)}$. Therefore,  

$$\mathfrak{Der}^n_{id_R}(L,\alpha_L)=C^{n+1}_{HL}(L,L),$$
where, $C^{n+1}_{HL}(L,L)$ is the space of $(n+1)$-linear alternating cochains defined in Section $2$ of \cite{AEM11}. Next, it is easy to see that the differential graded Lie algebra structure on $\mathfrak{Der}^*_{id_R}(L,\alpha_L)$ is also the same as the one discussed in \cite{AEM11}. Consequently, in the case of hom-Lie algebras, the cohomology $H^*_{def}(\mathcal{L},\alpha_L)$ is the same as the deformation cohomology $H^*_{HL}(L,L)$ defined in \cite{AEM11}. 

\subsection{Lie algebroids:}  
A Deformation complex for Lie algebroids is defined by considering the associated differential graded Lie algebra structure on the space of multiderivations in \cite{CM08}. 
On the other hand, one can deduce a deformation complex for Lie algebroids by considering them as a hom-Lie-Rinehart algebra.

 Let $E$ be a Lie algebroid over a smooth manifold $M$ with the anchor map $\rho$ and the Lie-bracket $[-,-]$ on the space of sections $\Gamma E$. Then the Lie algebroid structure on $E$ yields a hom-Lie-Rinehart algebra $(\mathcal{L},\alpha_L)$ over $(A,\phi)$ where $A = C^{\infty}(M),~ \phi=id_A,~ L = \Gamma E, [-,-]_L=[-,-],~\alpha_L = id_L,$ and $\rho_L = \rho$. By considering $\phi=id_A$ and $\beta=id_{L}$, the space of $(\phi,\beta)$-multiderivations turns out to be the space of multiderivations of the vector bundle $E$. Moreover, the differential graded Lie algebra structure on the space of $(\phi,\beta)$-multiderivations is the one (except in degree $-1$)  studied for Lie algebroids. 

\section*{\textbf{Conclusion:}}

Here, we described a differential graded Lie algebra (DGLA) for  hom-Lie-Rinehart algebras, which controls the formal one-parameter deformations. This study goes in line with  particular cases such as hom-Lie algebra, Lie-Rinehart algebras. One can expect to associate such a  differential graded Lie algebra to a hom-Lie algebroid since any hom-Lie algebroid is also a particular type of hom-Lie-Rinehart algebras. Next, it is natural to ask: can we interpret this differential graded Lie algebra in terms of deformations of a hom-Lie algebroid? This type of questions we plan to address in a separate note by introducing deformations of a hom-Lie algebroid as a smooth family of hom-Lie algebroids over an interval $I\subset \mathbb{R}$.


\end{document}